\documentclass{amsart}
\usepackage{amsmath}
\usepackage{amssymb}
\usepackage{amsthm}
\usepackage[mathscr]{eucal}
\pagestyle{plain}

\newtheorem{theorem}{Theorem}[section]
\newtheorem{lemma}[theorem]{Lemma}
\newtheorem{proposition}[theorem]{Proposition}
\newtheorem{corollary}[theorem]{Corollary}
\newtheorem*{remark}{Remark}

\newcommand{\K}{\mathcal K}
\newcommand{\V}{\mathcal V}
\newcommand{\W}{\mathcal W}
\newcommand{\R}{\mathbb R}
\newcommand{\X}{\mathfrak X}
\newcommand{\at}{\mathfrak{at}}
\newcommand{\qf}{\mathfrak{qf}}
\newcommand{\QFL}{{\rm QF}_L}
\newcommand{\QFLb}{\mathscr{QF}_L}
\newcommand{\Rep}{R_\varepsilon}
\newcommand{\SL}{{\mathcal S}_L}
\newcommand{\ML}{{\mathcal M}_L}
\newcommand{\St}{{\rm St}(\mathscr{QF}_L)}
\newcommand{\ModS}{{\rm Mod}_{\mathcal{S}_L}}
\newcommand{\ModM}{{\rm Mod}_{\mathcal{M}_L}}
\newcommand{\TL}{\mathcal{T}_L}
\newcommand{\GC}{\mathcal{G}_C}
\newcommand{\Gn}{\mathcal{G}_n}
\newcommand{\Cnu}{\mathfrak{C}_\nu}
\newcommand{\Ynu}{\mathfrak{Y}_\nu}
\newcommand{\Xnu}{\mathfrak{X}_\nu}
\newcommand{\Mnu}{\mathcal{M}_\nu}
\newcommand{\GH}{\mathcal{GH}}

\DeclareMathOperator{\core}{core}

\begin{document}

\title{The space of minimal structures}
\author{Oleg~Belegradek}
\thanks{}

\address{Department of Mathematics,
Istanbul Bilgi University, Dolapdere 34440,
Istanbul, Turkey}
\email{olegb@bilgi.edu.tr}

\date{}
\keywords{Minimal structure. Compactness theorem. Metric. Stone space. Compact space. Ultraproduct.
Gromov--Hausdorff topology.}
\subjclass{03C98, 51F99, 06E15, 03C20}

\begin{abstract}
For a signature $L$ with at least one constant symbol, an $L$-structure is called minimal
if it has no proper substructures. Let $\SL$ be the set of isomorphism types
of minimal $L$-structures. The elements of $\SL$
can be identified with ultrafilters of the
Boolean algebra of quantifier-free $L$-sentences, and therefore
one can define a Stone topology on  $\SL$.
This topology on $\SL$ generalizes the topology of the space of $n$\nobreakdash-marked groups.
We introduce a natural ultrametric on $\SL$, and show that
the Stone topology on $\SL$ coincide with
the topology of the ultrametric space $\SL$ iff  the ultrametric space $\SL$ is compact iff
$L$ is locally finite
(that is, $L$ contains finitely many $n$-ary symbols for any $n<\omega$).
As one of the applications of compactness of the Stone topology on $\SL$,
we  prove compactness of certain
classes of metric spaces
in the Gromov--Hausdorff topology.
This slightly refines the known result based on Gromov's ideas
that any uniformly totally bounded class of compact metric spases is precompact.
\end{abstract}

\maketitle
\section*{Introduction}
In the final remarks in his famous paper \cite{Gro1},
M.~Gromov explained how to deduce from the main result --- virtual nilpotency
of any finitely generated group of polynomial growth ---
the following more precise
version of the result:

\begin{quotation}
\emph{For any positive integers $k,d, n$,
there exists a positive integer $m$ such that
any $n$-generated group, in which for all $r=1,\dots,m$
the size of the ball of radius $r$ centered at the identity is at most $kr^d$,
has a subgroup of index and nilpotency class at most $m$.}
\end{quotation}

For a proof of that version,
he introduced and used a notion of limit of a sequence of groups
with distinguished $n$ generators. Implicitely, he defined a topology on
the class of such groups, and used its compactness, as well as
closedness of a certain subclass.
L.~van den Dries and A.~J.~Wilkie~\cite{DW} gave a new proof of the result above
by means of model-theoretic compactness theorem instead of
Gromov's topological compactness argument.

Formalizing Gromov's idea, R.~Grigorchuk~\cite{Gri} suggested a precise definition
of the topology used by M.~Gromov,
and showed that the defined topological space is
metrizable, separable, compact, and has a base consisting of clopen sets.
That topological space, the \emph{space of $n$-marked groups},
has been the subject of papers \cite{Ch, ChG, CGP}.

In the present paper we look at the space of marked groups from
a model-theoretic point of view, and introduce a more general \emph{space of minimal structures}.

For a signature $L$ containing at least one constant symbol,
an $L$-structure is called \emph{minimal} if it has no proper substructures.
For example, any $n$-marked group is a minimal $L$-structure, where $L$
is the language of groups with added $n$ constant symbols.

It is easy to show that the isomorphism type of a minimal $L$-structure is completely determined by its
quantifier-free theory (Proposition~\ref{qf}), and a set $S$ of quantifier-free $L$-sentences is the quantifier-free
theory of a minimal $L$-structure iff $S$ is a maximal finitely satisfiable
set of quantifier-free $L$-sentences (Proposition~\ref{max}).
The set $\SL$ of all such  $S$ can be equipped with a topology $\tau$,
a basis of which consists of the sets $\{S\in~\SL:~\phi\in S\},$
where $\phi$ is a quantifier-free $L$-sentence.
The topological space $(\SL,\tau)$ is naturally homeomorphic to
the Stone space of the Boolean algebra of quantifier-free $L$-sentences;
therefore it is compact and totally disconnected.
Therefore we call $(\SL,\tau)$ the Stone space of isomorphism types of minimal $L$-structures.
The space of isomorphism types of $n$-marked groups is just a clopen set in
the Stone space $\SL$ for a certain $L$.

We show that the `bounded' version of Gromov's theorem formulated above
can be deduced from its standard version using not model-theoretic compactness theorem
as it was done in~\cite{DW}, but only compactness of the Stone space $S_L$.

For any universally axiomatizable class $\K$ of $L$-structures, the set $\K_\star$ of
isomorphism types of minimal $L$-structures in $\K$ is closed in $\SL$
(Proposition~\ref{univ-theory}).
Let $\W$ be a variety of $L$-structures and $\V$ its subvariety.
We show
that $\V_\star$ is clopen in $\W_\star$ iff
the $\V$-free minimal $L$-structure
is finitely presentable in $\W$ (Proposition~\ref{fp}).
For example, for any group variety $\V$, the set of isomorphism types of $n$-marked $\V$-groups
is clopen in the space of isomorphism types of $n$-marked groups
iff the $\V$-free group of rank $n$ is finitely presentable.

For an arbitrary set $X$ of minimal $L$-structures, we characterize
in terms of ultraproducts the limit points of $X$ in the Stone topology
(Proposition~\ref{limit-ultra}).

As the Stone space of a Boolean algebra is metrizable iff the Boolean algebra is at most countable,
the space $(\SL,\tau)$ is metrizable iff $L$ is at most countable.
For an arbitrary $L$, we define a natural ultrametric on $S_L$
as follows.
For two minimal $L$-structures $M$ and $N$, the distance
between their quantifier-free theories is defined to be
equal to $1/m$, where $m$ is maximal with the property that
$M$ and $N$ satisfy the same atomic $L$-sentences of length at most $m$.
We study the properties of that ultrametric and its relation with the Stone topology on $\SL$.
We show that the topology  of the ultrametric space $\SL$ is finer or equal than the Stone topology  on $\SL$;
the two topologies coincide iff
the signature $L$ is locally finite.
(We call $L$ locally finite if $L$ contains finitely many $n$-ary symbols for any $n$.)
In particular, the ultrametric space $S_L$ is compact iff
$L$ is locally finite (Theorem~\ref{compact-S}).

As an application of compactness of the Stone space of minimal structures
we give a proof of compactness of certain subclasses
in the Gromov--Hausdorff space of metric spaces
(Theorem~\ref{nu-bound}, Corollary~\ref{compact}).
This refines the known result based on Gromov's ideas~\cite{BBI, Gro2}
that any uniformly totally bounded class of compact metric spaces is precompact
in the Gromov--Hausdorff topology.
For the proof, we associate with every semi-metric space certain relational structures
with the same uiniverse called semi-metric structures; the class of such structures is shown to
be universally axiomatizable.

For basics of model theory, see~\cite{H}.
The facts and notions of metric geometry we need  can be found in~\cite{BBI}.

\section{Minimal structures}

Let $L$ be a signature containing at least one constant symbol;
in this case the set $\TL$ of ground $L$-terms (that is, the terms without
free variables) is not empty.
We call an $L$-structure \emph{minimal} if it has no proper substructures, or,
equivalently, is generated by the empty set. Clearly,
an $L$\nobreakdash-structure is minimal iff
any its element is the value of some ground $L$-term in the structure.
For any $L$-structure $M$ the substructure generated by the empty set is
a unique minimal substructure; we call it the \emph{core of $M$}
and denote by $\core(M)$. We denote the class of all minimal $L$-structures by $\ML$.

Let $L_0$ be an arbitrary signature,
$C$ a nonempty set of constant symbols disjoint with $L_0$, and $L=L_0(C)$.
Clearly, an $L$-structure $M$ is minimal if and only if
the set $\{c^M: c\in C\}$ generates its $L_0$-reduct $M_0$.
Thus, any structure becomes minimal after naming its generators.
We call minimal $L$\nobreakdash-structures \emph{$C$-marked $L_0$\nobreakdash-structures}.
For any $L$-structure $M$ its core is a $C$-marked $L_0$-structure ---
it is the minimal substructure generated by $\{c^M: c\in C\}$.

The notion of marked structure generalizes the notion of marked group
(see \cite{ChG}),
which is defined to be a group with distinguished generators
(not necessarily all distinct). In this case $L_0=\{\cdot\,,^{-1}, e\}$,
and $C$ consists of names of generators of the group.
Note that here we do not assume that the group is finitely generated, and $C$ is finite.
If $C$ is finite, $|C|=n$, then $C$-marked groups are called $n$-marked groups.

Let $\QFL$ be the set of all quantifier-free $L$-sentences.
For an $L$-structure $M$
we denote by $\qf(M)$
the quantifier-free theory of $M$, that is,
the set of sentences in $\QFL$
that hold in $M$, and by
$\at(M)$ the set consisting of all atomic or
negated atomic $L$-sentences from $\qf(M)$.

We will need the following essentially known facts.

\begin{proposition}\label{qf}
For minimal $L$-structures $M$ and $N$ the following are equivalent:
\begin{enumerate}
\item
$M\simeq N$;
\item
$\qf(M)=\qf(N)$;
\item
$\at(M)=\at(N)$.
\end{enumerate}
\end{proposition}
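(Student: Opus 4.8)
The plan is to prove the cycle of implications $(1)\Rightarrow(2)\Rightarrow(3)\Rightarrow(1)$, with all the content concentrated in the last step. The implication $(1)\Rightarrow(2)$ is immediate, since isomorphic structures satisfy the same $L$-sentences, in particular the same quantifier-free ones. The implication $(2)\Rightarrow(3)$ is trivial as well, because $\at(M)$ is by definition the subset of $\qf(M)$ consisting of atomic and negated atomic sentences, so if $\qf(M)=\qf(N)$ then intersecting both sides with this syntactic class gives $\at(M)=\at(N)$.

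For $(3)\Rightarrow(1)$, suppose $\at(M)=\at(N)$. Since $M$ is minimal, every element of $M$ is $t^M$ for some ground term $t\in\TL$; likewise for $N$. The idea is to build an isomorphism $f\colon M\to N$ by sending $t^M\mapsto t^N$ for each ground term $t$. First I would check this is well-defined and injective: for ground terms $s,t$, the equality $s^M=t^M$ is the truth in $M$ of the atomic sentence $s=t$, which by hypothesis holds in $N$ iff $s^N=t^N$; so $s^M=t^M\iff s^N=t^N$, giving both well-definedness and injectivity. Surjectivity is exactly minimality of $N$. That $f$ preserves the interpretation of function symbols follows because for a function symbol $g$ and ground terms $t_1,\dots,t_k$ one has $g(t_1,\dots,t_k)$ again a ground term, and $f$ commutes with it by construction; constants are the case $k=0$. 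Finally, $f$ preserves relations: for a relation symbol $R$ and ground terms $t_1,\dots,t_k$, the atomic sentence $R(t_1,\dots,t_k)$ (or its negation) lies in $\at(M)$ iff it lies in $\at(N)$, which says precisely that $(t_1^M,\dots,t_k^M)\in R^M$ iff $(t_1^N,\dots,t_k^N)\in R^N$, i.e.\ iff $(f(t_1^M),\dots,f(t_k^M))\in R^N$. Hence $f$ is an isomorphism.

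The only mildly delicate point — the ``main obstacle'' such as it is — is bookkeeping around well-definedness: one must phrase things so that $f$ is defined on elements of $M$ (equivalence classes of ground terms under ``same value in $M$'') rather than on terms themselves, and then use the atomic-sentence hypothesis in both directions to see that the resulting map does not depend on the choice of term representative and is injective. Everything else is a routine induction on term complexity, and since only atomic and negated atomic sentences are used, the hypothesis $(3)$ suffices; there is no need to invoke $\qf(M)=\qf(N)$ directly, which is why the three conditions collapse.
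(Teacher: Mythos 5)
Your proof is correct and follows the same route as the paper: the only nontrivial implication is $(3)\Rightarrow(1)$, handled by the map $t^M\mapsto t^N$ on values of ground terms, with well-definedness, injectivity, and preservation of function and relation symbols all read off from the atomic sentences $s=t$ and $R(t_1,\dots,t_k)$ and their negations. The paper states this in one line; your version just fills in the bookkeeping.
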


\begin{proof}
$(1)\Rightarrow(2)\Rightarrow(3)$ is obvious.
If (3) then the map $t^M\mapsto t^N$ is a well-defined isomorphism
from $M$ onto $N$, and so (1).
\end{proof}

Due to this fact, we call $\qf(M)$ the \emph{isomorphism type} of a minimal $L$-structure~$M$.

\begin{proposition}\label{max}
For $S\subseteq \QFL$ the following are equi\-valent:
\begin{enumerate}
\item
$S=\qf(M)$, for some minimal $L$-structure $M$,
\item
$S$ is a maximal finitely satisfiable subset of $\QFL$;
\item
$S$ is finitely satisfiable, and
for any $\phi\in\QFL$
either $\phi\in S$ or $\neg\phi\in S$.
\end{enumerate}
\end{proposition}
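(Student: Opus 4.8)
The plan is to prove the cycle $(1)\Rightarrow(2)\Rightarrow(3)\Rightarrow(1)$; the first two implications are soft, and the substance lies in manufacturing a structure out of a set of sentences with property (3).

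For $(1)\Rightarrow(2)$: if $S=\qf(M)$ then $S$ is satisfiable, hence finitely satisfiable; and if $S\subsetneq S'\subseteq\QFL$ with $\phi\in S'\setminus S$, then $M\not\models\phi$, so $\neg\phi\in\qf(M)=S\subseteq S'$, and $S'$ contains the unsatisfiable pair $\{\phi,\neg\phi\}$. Thus $S$ is maximal among finitely satisfiable subsets of $\QFL$. For $(2)\Rightarrow(3)$: fix $\phi\in\QFL$; I claim that at least one of $S\cup\{\phi\}$, $S\cup\{\neg\phi\}$ is still finitely satisfiable. If not, pick finite $S_0,S_1\subseteq S$ with $S_0\cup\{\phi\}$ and $S_1\cup\{\neg\phi\}$ both unsatisfiable; since $S_0\cup S_1$ is a finite subset of $S$ it has a model $N$, and $N$ satisfies one of $\phi,\neg\phi$, contradicting unsatisfiability of the corresponding $S_i\cup\{\cdot\}$. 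By maximality the finitely satisfiable one of $S\cup\{\phi\}$, $S\cup\{\neg\phi\}$ equals $S$, so $\phi\in S$ or $\neg\phi\in S$ (and not both, since $\{\phi,\neg\phi\}$ is unsatisfiable).

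For $(3)\Rightarrow(1)$ I would build the ground-term structure. The key preliminary observation is that $S$ is closed under quantifier-free consequence: if $\Phi\subseteq S$ is finite, $\phi\in\QFL$, and $\Phi\models\phi$, then $\phi\in S$, since otherwise $\neg\phi\in S$ by (3) and $\Phi\cup\{\neg\phi\}$ would be an unsatisfiable finite subset of $S$. Define $\approx$ on the set $\TL$ of ground terms by $t\approx t'\iff(t=t')\in S$. Using this observation together with the validity of $t=t$ and the obvious finite sets of equations witnessing symmetry, transitivity, and the congruence of each function symbol, $\approx$ is a congruence on the term algebra. Let $M$ have universe $\TL/{\approx}$, interpret a constant $c$ as $[c]$, an $n$-ary function symbol $f$ as $([t_1],\dots,[t_n])\mapsto[f(t_1,\dots,t_n)]$, and an $n$-ary relation symbol $R$ by declaring $([t_1],\dots,[t_n])\in R^M$ iff $R(t_1,\dots,t_n)\in S$; well-definedness in both cases is another instance of the observation. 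By induction on $t$ the value of $t$ in $M$ is $[t]$, so $M$ is minimal.

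It remains to verify $\qf(M)=S$, which I would do by induction on the quantifier-free sentence $\phi$: the atomic case is immediate from the definition of $M$, while the steps for negation and the connectives use the observation for one inclusion and (3) (plus unsatisfiability of $\{\psi,\neg\psi\}$) for the other, exactly as in the standard proof that a quantifier-free-complete, finitely satisfiable set has a canonical model. Then $M$ is a minimal $L$-structure with $\qf(M)=S$, closing the cycle. The only place requiring care is this last implication, and there the work is purely bookkeeping --- checking that $\approx$ is a congruence and running the induction for $\qf(M)=S$ --- resting throughout on the interplay of finite satisfiability (consistency of $S$) and property (3) (quantifier-free completeness of $S$); no step is genuinely hard.
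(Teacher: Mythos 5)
Your proof is correct and follows essentially the same route as the paper: the easy implications are handled the same way, and the substance of $(3)\Rightarrow(1)$ is the identical ground-term (Herbrand) model construction, with your ``closure under quantifier-free consequence'' observation simply making explicit what the paper compresses into ``Due to (3), $f^M$ and $R^M$ are well-defined.'' No gaps.
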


\begin{proof}
$(1)\Rightarrow(2)\Rightarrow (3)$ is easy; we prove $(3)\Rightarrow(1)$.
By (3), $t=s\in S$ is an equivalence relation on $\TL$.
Denote by $[t]$ the equivalence class of $t\in\TL$.
Let $M$ be the $L$\nobreakdash-structure whose universe is $\{[t]: t\in \TL\}$,
and
$$f^M([t_1],\dots,[t_n])=[f(t_1,\dots,t_n)],$$
$$R^M=\{([t_1],\dots,[t_n]): R(t_1,\dots,t_n)\in S\},$$
for any function $L$-symbol $f$ and relation $L$-symbol $R$ of arity $n$.
Due to (3), $f^M$ and $R^M$ are well-defined.
By induction, $t^M=[t]$, for any $t\in\TL$.
Then $t=s\in S$ iff $t^M=s^M$, for any $t, s\in\TL$.
Using (3), it is easy to show by induction that
$\phi\in S$ iff $M\models\phi$, for any $\phi\in\QFL$. Thus $S=\qf(M)$.
Since $[t]$ is $t^M$ for any $t\in\TL$, the structure $M$ is minimal.
\end{proof}

Since, by Zorn's lemma, any finitely satisfiable subset of $\QFL$ can be completed to a maximal such subset,
we have

\begin{corollary}[Herbrand's theorem]\label{herbrand}
Any finitely satisfiable subset of $\QFL$ has a minimal model.
\end{corollary}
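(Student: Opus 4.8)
The plan is to derive this directly from Proposition~\ref{max} together with Zorn's lemma, exactly as the preceding sentence indicates. Let $S_0\subseteq\QFL$ be finitely satisfiable. Consider the collection $\mathcal P$ of all finitely satisfiable subsets of $\QFL$ that contain $S_0$, partially ordered by inclusion. It is nonempty, since $S_0\in\mathcal P$. If $\mathcal C\subseteq\mathcal P$ is a chain, then $\bigcup\mathcal C$ contains $S_0$ and is finitely satisfiable: any finite subset of $\bigcup\mathcal C$ is contained in a single member of $\mathcal C$ (for each of its finitely many sentences pick a member of $\mathcal C$ containing it, and take the largest of these finitely many members), hence is satisfiable. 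So $\bigcup\mathcal C$ is an upper bound for $\mathcal C$ in $\mathcal P$, and Zorn's lemma yields a maximal element $S$ of $\mathcal P$.

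Next I would observe that $S$ is in fact a maximal finitely satisfiable subset of $\QFL$, not merely maximal among those containing $S_0$: if $S\subseteq S'\subseteq\QFL$ with $S'$ finitely satisfiable, then $S'\supseteq S_0$, so $S'\in\mathcal P$, and maximality of $S$ in $\mathcal P$ forces $S'=S$. Thus $S$ satisfies condition~(2) of Proposition~\ref{max}, and therefore, by the implication $(2)\Rightarrow(1)$ of that proposition, $S=\qf(M)$ for some minimal $L$-structure $M$. Since $S_0\subseteq S=\qf(M)$, the structure $M$ is a model of $S_0$, and it is minimal, which is exactly what is claimed.

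There is no genuine obstacle here; the only point needing a moment's care is the verification that a union of a chain of finitely satisfiable sets is again finitely satisfiable, which is immediate because finite satisfiability is a property of finite subsets alone. It is worth emphasizing that this argument is purely set-theoretic (Zorn's lemma), invoking neither the compactness theorem nor a completeness theorem: the model $M$ produced in the proof of Proposition~\ref{max} is literally built from the ground $L$-terms, so the model we obtain is a Herbrand (term) model, which is what justifies attributing the statement to Herbrand.
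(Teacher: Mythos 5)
Your proof is correct and follows exactly the route the paper intends: complete the finitely satisfiable set to a maximal one via Zorn's lemma and then invoke Proposition~\ref{max} to realize it as $\qf(M)$ for a minimal $M$. The paper compresses this into a single sentence preceding the corollary, so your write-up is just a more detailed version of the same argument.
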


\begin{remark}
\emph{Herbrand's theorem
is a weak version of model-theoretic compactness theorem. This version admits a simple proof given above,
and in the present paper we need only this version of compactness theorem.}
\end{remark}

Denote by $\SL$ the set of all maximal finitely satisfiable subsets of $\QFL$.
Due to Proposition~\ref{max}, this is the set of
isomorphism types of minimal $L$-structures.

\section{The Stone space of minimal structures}\label{stone}

%\subsection{Topology on the set of isomorphism types of minimal structures}
\subsection{Topology on $\SL$}
It is easy to see that, for any $S\in\SL$ and $\phi, \psi\in\QFL$,
\begin{enumerate}
\item
$\phi\wedge\psi\in S\ \text{iff}\ \phi\in S\ \text{and}\ \psi\in S;$
\item
$\phi\vee\psi\in S\ \text{iff}\ \phi\in S\ \text{or}\ \psi\in S;$
\item
$\neg\phi\in S\ \text{iff}\ \phi\notin S$.
\end{enumerate}
In other words, if $U_\phi=\{S\in\SL: \phi\in S\}$, we have
$$
(1)\ U_{\phi\wedge\psi}=U_{\phi}\cap U_{\psi};\qquad
(2)\ U_{\phi\vee\psi}=U_{\phi}\cup U_{\psi};\qquad
(3)\ U_{\neg\phi}=U_{\phi}^c.
$$
Due to (1), $\{U_\phi: \phi\in\QFL\}$ is a basis of a topology on $\SL$;
we denote the topology by $\tau$.
Due to (3), the sets $U_\phi$ are clopen in $\tau$.
It is easy to show that $U_\phi=U_\psi$ iff $\phi$ and $\psi$ are equivalent.

Let $T$ be the set of finite conjunctions of atomic or negated atomic $L$-sentences.
Since any $\phi\in\QFL$ is equivalent to a finite disjunction of sentences from $T$
then, due to (2), $\{U_\phi: \phi\in T\}$ is a basis of $\tau$ as well.

\begin{proposition}\label{st-comp}
The topological space $(\SL,\tau)$  is
\begin{enumerate}
\item[(i)]
totally disconnected, and
\item[(ii)]
compact.
\end{enumerate}
\end{proposition}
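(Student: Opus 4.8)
The plan is to recognize $(\SL,\tau)$ as (homeomorphic to) the Stone space of the Boolean algebra $\mathscr B$ of quantifier-free $L$-sentences modulo logical equivalence, and then invoke the standard facts about Stone spaces. Concretely, the map $S\mapsto S$ identifies elements of $\SL$ with ultrafilters of $\mathscr B$: by Proposition~\ref{max}(3), a maximal finitely satisfiable set $S$ contains exactly one of $\phi,\neg\phi$ for each $\phi$, is closed under $\wedge$, and is upward closed under provable implication (if $\phi\in S$ and $\phi\to\psi$ is valid then $\psi\in S$, since otherwise $\neg\psi\in S$ and $\{\phi,\neg\psi\}$ is not satisfiable); conversely every ultrafilter of $\mathscr B$ pulls back to such a set. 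Under this identification the basic open set $U_\phi$ corresponds to the basic clopen set $\{\,\mathfrak u:[\phi]\in\mathfrak u\,\}$ of the Stone space, so $\tau$ is exactly the Stone topology. Then part (i) follows because Stone spaces are totally disconnected (equivalently, the displayed identities (1)--(3) already show the $U_\phi$ form a basis of clopen sets separating points, so every connected component is a singleton), and part (ii) follows from the compactness half of Stone duality.

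If one prefers a self-contained argument avoiding the general Stone machinery, I would prove (ii) directly by the usual ultrafilter/finite-intersection argument, which is really just Herbrand's theorem (Corollary~\ref{herbrand}) in topological clothing. Since $\{U_\phi:\phi\in\QFL\}$ is a basis of clopen sets, it suffices to show that any family $\{U_{\phi_i}:i\in I\}$ with the finite intersection property has nonempty intersection. By the identities (1) and (3), $\bigcap_{i\in F}U_{\phi_i}=U_{\bigwedge_{i\in F}\phi_i}\neq\varnothing$ for every finite $F\subseteq I$ means exactly that $\bigwedge_{i\in F}\phi_i$ is satisfiable, i.e.\ the set $\{\phi_i:i\in I\}\subseteq\QFL$ is finitely satisfiable; by Herbrand's theorem it has a minimal model $M$, and then $\qf(M)\in\bigcap_{i\in I}U_{\phi_i}$. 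To get compactness for an arbitrary open cover, reduce to a subbasic (hence basic) cover and pass to complements: if $\{U_{\phi_i}\}$ covers $\SL$ with no finite subcover, then $\{U_{\neg\phi_i}\}=\{U_{\phi_i}^c\}$ has the finite intersection property, contradicting the previous sentence. For (i), given distinct $S,S'\in\SL$, Proposition~\ref{qf} gives some $\phi\in S\setminus S'$, and then $U_\phi,U_\phi^c$ is a clopen partition separating $S$ from $S'$, so the quasicomponent of $S$ is $\{S\}$; in a compact Hausdorff space quasicomponents coincide with components, so $(\SL,\tau)$ is totally disconnected.

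The only point requiring a little care — and the step I would single out as the main (minor) obstacle — is verifying that $\tau$ really is Hausdorff and that the space is Hausdorff-compact rather than merely quasicompact, since "totally disconnected" is only well-behaved in the presence of Hausdorffness. This is immediate from (i)'s separation observation: two distinct maximal finitely satisfiable sets are separated by complementary basic clopen sets $U_\phi$ and $U_\phi^c$. Everything else is a routine unwinding of the Boolean identities (1)--(3) together with Herbrand's theorem, so I would keep the write-up short: identify $\SL$ with $\St(\mathscr B)$, cite total disconnectedness and compactness of Stone spaces, and optionally remark that the compactness proof is nothing but Corollary~\ref{herbrand}.
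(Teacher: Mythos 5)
Your proposal is correct, and both of your routes lead to valid proofs, but neither is quite the paper's own argument, so a comparison is worth making. The paper proves the proposition directly and self-containedly: for (i) it separates two distinct points by the complementary clopen sets $U_\phi$ and $U_{\neg\phi}$ exactly as you do; for (ii) it passes from a basic open cover $\{U_\phi:\phi\in T\}$ to the family $\{U_{\neg\phi}:\phi\in T\}$ with empty intersection and observes that if every finite subfamily had nonempty intersection, then $\{\neg\phi:\phi\in T\}$ would be finitely satisfiable and hence, \emph{by Zorn's lemma alone}, extendable to a maximal finitely satisfiable set, i.e.\ to an element of $\SL$. Your first route (identify $\SL$ with $\St$ and cite Stone duality) is deferred by the paper to the following subsection and is not used in this proof; it buys brevity at the cost of importing the general theory. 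Your second route is essentially the paper's argument, except that you close the finite-intersection step by invoking Herbrand's theorem (Corollary~\ref{herbrand}) to produce a minimal model $M$ and then take $\qf(M)$. This is not circular, since Herbrand's theorem is established beforehand, but it is strictly more than is needed: the point the paper makes explicitly in the remark following the proposition is that compactness of $\tau$ requires only the Zorn-completion of a finitely satisfiable set to a member of $\SL$, not the construction of a model, and your phrasing obscures that refinement. Two small further notes: your reduction of an arbitrary open cover to a basic one is fine (no appeal to Alexander's subbase lemma is needed, since every open set is a union of basic ones); and for (i) the appeal to the coincidence of components and quasicomponents in compact Hausdorff spaces is unnecessary, because components are always contained in quasicomponents, so a basis of clopen sets separating points already forces all components to be singletons.
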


\begin{proof}
(i) Let $S$ and $P$ be different elements of $\SL$. Let, say,
$\phi\in S$ and $\phi\notin P$. Then $S\in U_\phi$, and $P\in U_{\phi}^c$.
Since $U_\phi^c=U_{\neg\phi}$, by (3), both $U_\phi$ and $U_\phi^c$ are open, and the result follows.

(ii)
Suppose $\{U_\phi: \phi\in T\}$ covers $\SL$, where $T\subseteq\QFL$.
Then $\bigcap_{\phi\in T}U_{\neg\phi}=\emptyset$, that is,
there is no $S\in \SL$
with $\{\neg\phi :\phi\in T\}\subseteq S$.
Then, for some finite $F\subseteq T$, there is no $S\in \SL$
with $\{\neg\phi :\phi\in F\}\subseteq S$;
otherwise $\{\neg\phi :\phi\in T\}$ would be finitely satisfiable, and so could be
completed to a member of $\SL$, by Zorn's lemma.
Hence  $\bigcap_{\phi\in F}U_{\neg\phi}=\emptyset$, and so
$\{U_\phi: \phi\in F\}$ covers $\SL$.
\end{proof}

\begin{remark}
\emph{The proof of compactness of the topology $\tau$ did not use the model-theoretic compactness theorem
even in its weaker Herbrand's version.}
\end{remark}

\begin{proposition}\label{clopen-fin-ax}
Any set clopen in $\tau$ is $U_\phi$, for some $\phi\in\QFL$.
\end{proposition}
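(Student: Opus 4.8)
The plan is to exploit compactness of $(\SL,\tau)$ established in Proposition~\ref{st-comp}(ii), together with the fact that the sets $U_\phi$, $\phi\in\QFL$, form a basis of $\tau$ which is closed under finite unions via identity~(2).

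First I would take an arbitrary clopen set $K\subseteq\SL$. Since $K$ is closed in the compact space $(\SL,\tau)$, it is itself compact. Since $K$ is also open and $\{U_\phi:\phi\in\QFL\}$ is a basis of $\tau$, we can write $K=\bigcup_{i\in I}U_{\phi_i}$ for some family $\{\phi_i:i\in I\}\subseteq\QFL$. By compactness of $K$, finitely many of these basic open sets already cover $K$, say $K=U_{\phi_1}\cup\dots\cup U_{\phi_n}$.

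Now I would apply identity~(2), $U_{\phi\vee\psi}=U_\phi\cup U_\psi$, inductively to conclude $K=U_{\phi_1\vee\dots\vee\phi_n}$. Since $\QFL$ is closed under disjunction, $\phi:=\phi_1\vee\dots\vee\phi_n$ lies in $\QFL$, and $K=U_\phi$, as required.

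There is no real obstacle here: the argument is the standard characterization of clopen sets in a compact space with a clopen basis closed under finite unions, and every ingredient — compactness, the basis property, and the Boolean identity~(2) — has already been recorded in the excerpt. The only point worth stating explicitly is the passage from "closed in a compact space" to "compact", which justifies extracting a finite subcover.
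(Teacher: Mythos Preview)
Your proof is correct and is essentially identical to the paper's: both use that a clopen set in the compact space $(\SL,\tau)$ is compact, extract a finite subcover by basic sets $U_{\phi_i}$, and apply identity~(2) to write the union as $U_{\phi_1\vee\dots\vee\phi_n}$.
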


\begin{proof}
Any set $U$ open in $\tau$ is $\bigcup_{\phi\in T}U_\phi$, for some $T\subseteq\QFL$.
If $U$ is closed, it is compact, by Proposition~\ref{st-comp}(2), and hence
$U=\bigcup_{\phi\in F}U_\phi$,
for some finite $F\subseteq T$.
By (2), $U=U_\phi$, where $\psi=\bigvee_{\phi\in F}\phi$.
\end{proof}

For $\phi\in\QFL$, denote by $[\phi]$ the set of all $\psi\in\QFL$ equivalent to $\phi$.
The sets $[\phi]$ form a Boolean algebra with the operations induced by the logical operators
$\wedge$, $\vee$, and $\neg$.
We denote that Boolean algebra by  $\QFLb$, and its Stone space by  $\St$.

Recall that for a Boolean algebra $\mathscr{B}$ its Stone space ${\rm St}(\mathscr{B})$ is defined to be
the topological space
whose points are ultrafilters of $\mathscr{B}$, and a basis of topology
is $\{U_b: b\in\mathscr{B}\}$, where
$$U_b=\{p: p\ \text{is an ultrafilter of $\mathscr{B}$ with}\ b\in p\}.$$
It is known (see~\cite[\S 8]{S}) that ${\rm St}(\mathscr{B})$
is compact and totally disconnected;
it is metrizable iff it has a countable basis iff $|\mathscr{B}|\le\aleph_0$;
its clopen sets are exactly the sets $U_b$.
Any closed subspace $X$ of ${\rm St}(\mathscr{B})$ is a compact, totally disconnected space;
its clopen sets are exactly the sets $U_b\cap X$, and they form a basis of $X$.

For $T\subseteq\QFL$ denote $\{[\phi]: \phi\in T\}$ by $[T]$.
It is not difficult to show that  $S\mapsto [S]$ is a bijection between $\SL$ and
the set of ultrafilters of $\QFLb$. Moreover, $[U_\phi]=U_{[\phi]}$, for any $\phi\in\QFL$.
Therefore $S\mapsto [S]$ is a natural homeomorphism between the topological space $(\SL,\tau)$ and
the Stone space ${\rm St}(\QFLb)$.
Because of that, we call $\tau$ the \emph{Stone topology on $\SL$.}
Since $\SL$ is the set of isomorphism types of minimal $L$-structures,
we call the topological space $(\SL,\tau)$ the \emph{Stone space of isomorphism types of
minimal $L$-structures}, or,
for short, the Stone space $\SL$.

As $|\QFLb|\le\aleph_0$ iff $|L|\le\aleph_0$,
the Stone space $\St$ is metrizable iff $|L|\le\aleph_0$.
Since any compact metric space is separable, $\St$ is separable if $|L|\le\aleph_0$.
Thus, the Stone space of minimal $L$-structures is metrizable and separable if $|L|\le\aleph_0$.

For an $L$-sentense $\phi$ denote by
$\ModM(\phi)$ the class of minimal models of $\phi$, and by
$\ModS(\phi)$ the set of isomorphism types of minimal models of~$\phi$.
In other words,
$$\ModM(\phi)=\{M\in\ML: M\models\phi\},$$
$$\ModS(\phi)=\{\qf(M): M\in\ML,\ M\models\phi\}.$$
Clearly, for $\phi\in\QFL$,
$$\ModS(\phi)=U_\phi.$$
Thus for any $\phi\in\QFL$ the set $\ModS(\phi)$ is a clopen subspace of the Stone space~$\SL$.

\begin{proposition}\label{exist-open}
If $\phi$ is an existential $L$-sentence then
$\ModS(\phi)$ is open.
\end{proposition}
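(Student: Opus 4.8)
The plan is to show that $\ModS(\phi)$ is a union of basic clopen sets $U_\psi$ with $\psi\in\QFL$. Write $\phi$ as $\exists x_1\cdots\exists x_n\,\theta(x_1,\dots,x_n)$ with $\theta$ quantifier-free. The key observation is that in a \emph{minimal} $L$-structure $M$ every element is the value of a ground term, so $M\models\phi$ iff there exist ground terms $t_1,\dots,t_n\in\TL$ with $M\models\theta(t_1,\dots,t_n)$. For a fixed tuple $\bar t=(t_1,\dots,t_n)$ of ground terms, the sentence $\theta(t_1,\dots,t_n)$ is quantifier-free, hence lies in $\QFL$, and $\{M\in\ML:M\models\theta(\bar t)\}$ has isomorphism types exactly $U_{\theta(\bar t)}$.

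Carrying this out: first I would record the reformulation just mentioned, namely that for a minimal $L$-structure $M$ one has $M\models\phi$ if and only if $M\models\theta(t_1,\dots,t_n)$ for some $\bar t\in\TL^n$; this uses only the characterization of minimality (every element is $t^M$ for some ground term $t$) together with the fact that the $t_i^M$ range over all of $M^n$ as $\bar t$ ranges over $\TL^n$. Passing to isomorphism types via Proposition~\ref{qf}, this says precisely
$$
\ModS(\phi)=\bigcup_{\bar t\in\TL^n}U_{\theta(t_1,\dots,t_n)}.
$$
Each $U_{\theta(t_1,\dots,t_n)}$ is open (indeed clopen) in $\tau$ by the definition of the Stone topology, and a union of open sets is open, so $\ModS(\phi)$ is open. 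One should also note the degenerate bookkeeping cases: if $n=0$ then $\phi$ is already quantifier-free and the claim reduces to the identity $\ModS(\phi)=U_\phi$ noted just before the proposition; and since $L$ has a constant symbol, $\TL$ is nonempty, so the index set is nonempty (though that is immaterial, as the empty union is still open).

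I do not expect a genuine obstacle here; the content is entirely the translation of "$\exists$" into "there is a ground term", which is exactly what minimality buys us. The only point requiring a line of care is the claim that the $t_i^M$ sweep out all of $M^n$: this is immediate because each coordinate is independently the value of an arbitrary ground term, and minimality gives surjectivity of $t\mapsto t^M$ from $\TL$ onto $M$. Everything else is formal manipulation of the basic clopen sets $U_\psi$ already set up in this section.
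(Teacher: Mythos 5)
Your proposal is correct and follows the paper's proof essentially verbatim: both translate the existential quantifiers into a search over ground terms via minimality and write $\ModS(\phi)$ as the union of the clopen sets $\ModS(\psi(t_1,\dots,t_n))$ over tuples from $\TL$. The extra remarks on the degenerate case $n=0$ and on nonemptiness of $\TL$ are harmless bookkeeping not present in the paper.
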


\begin{proof}
Let $\phi$ be $\exists v_1\dots v_n\psi(v_1,\dots,v_n),$
where $\psi$ is quantifier-free.
Clearly, $\phi$ holds in a minimal $L$-structure $M$ iff $M\models\psi(t_1,\dots,t_n)$
for some $t_1,\dots,t_n\in\TL$.
Therefore $\ModS(\phi)$
is the union of all sets $\ModS(\psi(t_1,\dots,t_n))$, where $t_1,\dots,t_n\in\TL$.
Since all $\ModS(\psi(t_1,\dots,t_n))$ are clopen,
$\ModS(\phi)$ is open.
\end{proof}

\begin{proposition}\label{univ-closed}
If $\phi$ is a universal $L$-sentence then
$\ModS(\phi)$ is closed.
\end{proposition}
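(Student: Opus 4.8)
The plan is to reduce the universal case to the existential case already handled in Proposition~\ref{exist-open}, using the fact that $\ModS(\phi)$ and $\ModS(\neg\phi)$ partition $\SL$. Concretely, if $\phi$ is a universal $L$-sentence, then $\neg\phi$ is logically equivalent to an existential $L$-sentence $\psi$ (push the negation inside: $\neg\forall \bar v\,\theta(\bar v)$ becomes $\exists \bar v\,\neg\theta(\bar v)$ with $\neg\theta$ quantifier-free). For any minimal $L$-structure $M$ we have $M\models\phi$ iff $M\not\models\psi$, and since isomorphic minimal structures satisfy the same sentences (and every member of $\SL$ is realized by some minimal $M$, by Proposition~\ref{max}), it follows that $\ModS(\phi)=\SL\setminus\ModS(\psi)$.

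First I would spell out this complementation identity carefully, checking that $\ModS$ is well-defined on $\SL$ — i.e.\ that membership of $\qf(M)$ in $\ModS(\chi)$ depends only on the isomorphism type of $M$, which is immediate since $M\models\chi$ iff $N\models\chi$ whenever $M\simeq N$. Then I would invoke Proposition~\ref{exist-open} to conclude that $\ModS(\psi)$ is open in $\tau$, whence its complement $\ModS(\phi)$ is closed. That is the entire argument.

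I do not expect a genuine obstacle here; the only point requiring a word of care is the passage from ``$\neg\phi$ is equivalent to an existential sentence'' to ``$\ModS(\neg\phi)=\ModS(\psi)$ for that existential $\psi$'', which is fine because equivalent sentences have the same minimal models and hence the same image under $\ModS$ (this was already noted for quantifier-free sentences via $U_\phi=U_\psi$, and the same reasoning applies to arbitrary $L$-sentences at the level of model classes). One could alternatively bypass Proposition~\ref{exist-open} and argue directly that $\ModS(\phi)=\bigcap_{t_1,\dots,t_n\in\TL}\ModS(\theta(t_1,\dots,t_n))$ is an intersection of clopen sets, hence closed; I would mention this as the symmetric counterpart of the proof of Proposition~\ref{exist-open}, but the complementation argument is shorter and makes the duality with the existential case transparent.
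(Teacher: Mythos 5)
Your argument is correct and is essentially the paper's own proof: the paper likewise observes that $\phi$ is equivalent to the negation of an existential sentence and concludes that $\ModS(\phi)$ is the complement of the open set given by Proposition~\ref{exist-open}. The extra care you take with well-definedness of $\ModS$ on isomorphism types, and the alternative intersection-of-clopen-sets argument, are fine but not needed beyond what the paper records.
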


\begin{proof}
The sentence $\phi$ is equivalent to $\neg\theta$ for some existential $L$-sentence $\theta$. Then
the complement of $\ModS(\phi)$ in $\SL$ is the set $\ModS(\theta)$, which is open
by Proposition~\ref{exist-open}.
\end{proof}

For an $L$-theory $T$, denote by $\ModS(T)$ the set of isomorphism types of minimal
models of $T$.

\begin{proposition}\label{univ-theory}
If $T$ is a universal $L$-theory then $\ModS(T)$ is closed.
\end{proposition}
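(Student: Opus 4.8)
The plan is to reduce the general case of a universal theory to the single-sentence case already established in Proposition~\ref{univ-closed}, using the compactness of the Stone topology. First I would write $T = \{\phi_i : i \in I\}$ as a set of universal $L$-sentences. A minimal $L$-structure is a model of $T$ precisely when it is a model of every $\phi_i$, so the set of isomorphism types of minimal models of $T$ is the intersection
$$\ModS(T) = \bigcap_{i\in I}\ModS(\phi_i).$$
Each $\ModS(\phi_i)$ is closed in the Stone topology $\tau$ by Proposition~\ref{univ-closed}, since each $\phi_i$ is universal. An arbitrary intersection of closed sets is closed, so $\ModS(T)$ is closed in $(\SL,\tau)$, and we are done.

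The only point that needs a word of care is the first equality, namely that $\ModS(T) = \bigcap_i \ModS(\phi_i)$ as subsets of $\SL$. This is immediate: an isomorphism type $S \in \SL$ lies in $\ModS(T)$ iff $S = \qf(M)$ for some minimal $M \models T$, and by Proposition~\ref{qf} such an $M$ is determined up to isomorphism by $S$, so $S \in \ModS(T)$ iff the minimal structure with quantifier-free theory $S$ satisfies every $\phi_i$, iff $S \in \ModS(\phi_i)$ for all $i$. Thus there is essentially no obstacle here: compactness of $\tau$ is not even needed for this proposition (it would be needed only if one wanted the stronger conclusion that $\ModS(T)$ is clopen under a finite-axiomatizability hypothesis, via Proposition~\ref{clopen-fin-ax}). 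The entire content is the observation that universal closedness is preserved under arbitrary intersection, which is why I expect the proof to be a two-line argument rather than requiring any new idea.
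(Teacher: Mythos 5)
Your proof is correct and is essentially identical to the paper's: both write $\ModS(T)=\bigcap_{\phi\in T}\ModS(\phi)$ and invoke Proposition~\ref{univ-closed} together with the fact that an arbitrary intersection of closed sets is closed. Your extra remarks (justifying the intersection identity via Proposition~\ref{qf}, and noting that compactness is not needed) are accurate but not a different argument.
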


\begin{proof}
Since $\ModS(T)=\bigcap_{\phi\in T}\ModS(\phi),$
this follows from Proposition~\ref{univ-closed}.
\end{proof}

Similarly to the Stone topology on $\SL$, one can define a topology on the class $\ML$
whose basis consists of the classes $\ModM(\phi)$, where $\phi\in\QFL$.
We call that topology the \emph{Stone topology on} $\ML$.
The class $\ML$ equipped with that topology is called the \emph{Stone space
of minimal $L$-structures}, or,
for short, the Stone space~$\ML$.
Obviously, analogs of Propositions~\ref{st-comp}--\ref{univ-theory}
hold for it, with one exception: the Stone space $\ML$ is not Hausdorff (and so not totally disconnected),
because any isomorphic but different members of $\ML$
cannot be separated by open sets.
Note that compactness of the Stone space $\ML$ is based on Herbrand's theorem.

If $L=L_0(C)$, we call the Stone space $\SL$ the \emph{Stone space
of isomorphism types $C$-marked $L_0$-structures}.
Let $L_0=\{\cdot\,,^{-1}, e\}$, and $\gamma$ be the universal $L_0$-sentence
that axiomatizes the class of groups.
Then $\ModS(\gamma)$ is a closed subspace of the Stone space $\SL$,
by Proposition~\ref{univ-theory}.
Its points are isomorphism types of groups with generators
marked by elements of $C$.
We call this topological space the \emph{space of isomorphism types of $C$-marked groups} and denote it by $\GC$.
The space $\GC$ is compact and totally disconnected.

For $\psi\in\QFL$, the set $U_{\psi}\cap\GC$ is
the set of isomorphism types
of $C$-marked groups satisfying $\psi$; it is clopen in $\GC$.
Any clopen set in $\GC$ is of that form, and the sets
$U_{\psi}\cap\GC$ form a basis of $\GC$.
Moreover, for the set $\Psi$ of
finite conjunctions of $L$-sentences of the form $w=e$ or $w\ne e$, where $w$ is a group word over $C$,
the set $\{U_{\psi}\cap\GC: \psi\in\Psi\}$ is a basis of the space $\GC$.

For a finite set of constant symbols $C$ with $|C|=n$, the space $\GC$
is exactly the space of isomorphism types of $n$-marked groups introduced in~\cite{Gri};
we denote it by~$\Gn$.

\begin{proposition}\label{GC-clopen}
The set $\GC$ is clopen in $\SL$
iff $C$ is finite.
\end{proposition}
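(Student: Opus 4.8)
The plan is to prove both directions. For the easy direction, suppose $C$ is finite. Then $\GC$ is axiomatized within $\ML$ by the single universal sentence $\gamma$ together with... actually, more directly: the set $\GC = \ModS(\gamma)$ is closed by Proposition~\ref{univ-theory}, and I claim it is also open. To see openness, observe that an $L$-structure $M$ (with $L = L_0(C)$, $L_0$ the group language) fails to be a group iff one of finitely many universal group axioms fails; but failure of a universal axiom is an existential condition, and since $M$ is minimal, each witness can be taken among ground terms. When $C$ is finite, each ground $L_0(C)$-term is (interpreted as) a group word in the $n = |C|$ marked constants, and — here is the point — I should exhibit $\GC$ itself as $U_\phi$ for a suitable $\phi \in \QFL$. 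Hmm, that cannot literally work since the complement $\SL \setminus \GC$ is infinite and itself not obviously $U_\psi$. Let me instead argue: the complement of $\GC$ in $\SL$ is $\ModS(\theta)$ where $\theta = \neg\gamma$ is existential, hence open by Proposition~\ref{exist-open}; that already gives $\GC$ closed. For $\GC$ open, I want to write $\SL \setminus \GC$ as closed, i.e. $\theta$ equivalent to a universal sentence on $\ML$. That is not true in general. So the right approach is different: show directly that $\GC$ is a \emph{clopen} neighborhood by producing $\phi \in \QFL$ with $U_\phi = \GC$.

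So the real plan: when $C = \{c_1,\dots,c_n\}$ is finite, I claim $\GC = U_\phi$ where $\phi$ is a quantifier-free sentence expressing that the (finitely many, up to the relevant depth) instances of the group axioms hold. Concretely, a minimal $L$-structure $M$ is a group iff its $L_0$-reduct is, iff the group axioms hold for all ground terms, iff — since being a group is equationally axiomatizable and ground terms are group words in $c_1,\dots,c_n$ — $M$ satisfies $e\cdot c_i = c_i$, $c_i \cdot c_i^{-1} = e$, and associativity $(c_i c_j) c_k = c_i (c_j c_k)$ for all $i,j,k \le n$, \emph{plus} the equations making $\cdot$ and $^{-1}$ well-defined as operations compatible with the group structure on the generated subgroup. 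The subtlety: the universe of $M$ consists of classes of \emph{all} ground terms, and we need the axioms to hold for all of them, not just the generators. But the standard fact is that a magma with a two-sided identity and two-sided inverses satisfying associativity \emph{on generators} need not be associative globally — so this naive finite conjunction is insufficient. The correct move is to use that the variety of groups is finitely based: $M$'s $L_0$-reduct is a group iff finitely many \emph{universally quantified} identities hold. Translating to minimal structures and ground terms, $U_\gamma$-membership becomes an infinite conjunction of ground instances $\bigwedge_{t_1,\dots} \psi(t_1,\dots)$, which is a priori only closed.

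Hence the key technical step — and the main obstacle — is to show that when $C$ is finite, this infinite conjunction defining $\GC$ can be \emph{finitely truncated}: there is an $m$ such that if the group axioms hold for all ground terms of length $\le m$, they hold for all ground terms. This follows from compactness of $\GC$ (Proposition~\ref{univ-theory} plus Proposition~\ref{st-comp}(ii)) combined with Proposition~\ref{clopen-fin-ax} \emph{once we know $\GC$ is open}; to break this circularity I would instead argue that the complement is open via Proposition~\ref{exist-open} (giving $\GC$ closed), and that $\GC$ is open by a direct Herbrand-style argument: if $M \notin \GC$, some instance of a group axiom fails in $M$ at specific ground terms $\bar{t}$, so the quantifier-free sentence $\neg\psi(\bar{t})$ holds in $M$; the basic open set $U_{\neg\psi(\bar t)}$ contains $M$ and misses $\GC$. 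Wait — that shows $\SL \setminus \GC$ is open, again only giving $\GC$ closed. To get $\GC$ open I genuinely need finiteness of $C$: I would show that there are only \emph{finitely many} ground $L_0(C)$-terms up to provable equality in the group axioms of each bounded complexity is false, so instead: the \emph{reason} $\GC$ is open when $C$ is finite is that $\GC = \ML_{\text{minimal models of }\gamma}$, and one can axiomatize "not a group" among structures whose universe is generated by $n$ elements using an \emph{existential} sentence with a \emph{bounded} number of quantifiers whose negation is thus universal of bounded complexity — but "not a group" is existential, full stop, regardless of $n$. The resolution: for $C$ infinite, I show $\GC$ is \emph{not} open by exhibiting minimal $L$-structures arbitrarily close (in $\tau$) to a marked group that are not groups — e.g. take a group $G$ marked by an infinite $C$, and perturb the interpretation of $\cdot$ on a pair of terms involving constants $c \in C$ of arbitrarily high index, destroying a group axiom while preserving all atomic sentences of any prescribed finite length (since those involve only finitely many constants). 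Thus every basic open neighborhood $U_\psi$ of $\qf(G)$ contains a non-group, so $\GC$ is not open. For $C$ finite, $\GC$ open follows because the non-groups cannot be approximated: any potential failure of a group axiom at ground terms $\bar t$ is "seen" at bounded length (all group words in $c_1,\dots,c_n$ of bounded length exhaust, up to the relevant atomic formulas, a cofinal family), so a suitable $U_\psi \ni \qf(G)$ forces the axioms. I expect the infinite-$C$ direction (constructing the perturbations respecting minimality and the length metric) to be the delicate part, and I would carry it out by building the perturbed structure as a quotient of the term algebra $\TL$ as in the proof of Proposition~\ref{max}.
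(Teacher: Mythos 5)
Your proposal correctly isolates the crux of the finite-$C$ direction --- that in a minimal $L$-structure the group axioms imposed only at the marked constants (or at ground terms of any fixed bounded length) need not propagate to the whole structure --- but it never overcomes it. The step you ultimately rely on, namely that ``any potential failure of a group axiom at ground terms $\bar t$ is seen at bounded length,'' so that the infinite conjunction of ground instances defining $\GC$ can be finitely truncated, is false. Take $C=\{c\}$ and let $G=\mathbb{Z}$ be the marked group with $c^G=1$. Given any quantifier-free $\psi\in\qf(G)$, let $m$ bound the lengths of the ground terms occurring in $\psi$ and pick $K\ge m$. Let $N$ have universe $\mathbb{Z}\cup\{*\}$, with all operations as in $G$ except that $0\cdot^N K={*}$, and with $x\cdot^N{*}={*}\cdot^N x={*}$ and ${*}^{-1}={*}$. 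Then $N$ is minimal (${*}$ is the value of $e\cdot t$ for a ground term $t$ of value $K$), $N$ is not a group ($e^N\cdot^N K\ne K$), and an easy induction on terms of length $<m$ shows $t^N=t^G$ for all such $t$ (no evaluation ever meets the pair $(0,K)$, since a term of length $<m$ has value of absolute value $<m\le K$). Hence $\qf(N)\in U_\psi\setminus\GC$. So no basic neighbourhood of $\qf(G)$ lies in $\GC$, and the truncation you need does not exist.

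You should also know that your objection to the ``naive finite conjunction'' applies verbatim to the paper's own proof of this direction, which asserts that $\GC=\ModS(\Theta)$ for $\Theta$ the associativity, identity and inverse instances at the constants of $C$: the structure $N$ above is a minimal model of $\Theta$ that is not a group, so that identity fails; and the computation above further shows that $\qf(G)$ is not even an interior point of $\GC$. In other words, the half of the equivalence you could not prove appears to be genuinely unprovable as stated, and your instinct that the finite conjunction is insufficient was the correct one --- the error was in talking yourself back into the conclusion at the end. Your sketch for infinite $C$ (perturb the multiplication at constants of large index, invisible to any fixed quantifier-free sentence) is essentially the paper's own argument for that direction and is sound, but it establishes only one implication of the claimed biconditional.
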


\begin{proof}
Suppose $C$ is finite.
Let $\Theta$ be the set of quantifier-free $L$-sentences
$$(a\cdot b)\cdot c=a\cdot(b\cdot c),\qquad c\cdot e=e\cdot c=c,\qquad c\cdot c^{-1}=c^{-1}\cdot c=e$$
for all constant symbols $a,b,c$ in $C$. Clearly, $\Theta$ is finite.
It is easy to show that
$\GC=\ModS(\Theta)=U_\theta,$
where $\theta=\bigwedge\Theta$.
Therefore $\GC$ is clopen.

Now we show that if $C$ is infinite then $\GC$ is not clopen.
Suppose not, and $\GC=U_\theta$, where $\theta\in\QFL$.
Let $C^*$ be the finite set of all $c\in C$ that occurs in $\theta$.
Consider any $C^*$-marked group $M^*$. It is easy to construct
a minimal $L$-structure $N$
such that $M^*$ is a substructure of its $L_0(C^*)$-reduct,
and the $L_0$-reduct of $N$ is not a group.
Since any $L$-expansion of $M^*$ belongs to $\GC$, we have $M^*\models\theta$.
Therefore $N\models\theta$, and hence $\qf(N)\in\GC$.  Contradiction.
\end{proof}

\begin{remark}
\emph{A special case of Proposition~\ref{univ-closed} was proven in \cite[Section 5.2]{ChG}:
\emph{for any universal sentence $\theta$ in the group language,
the set of isomorphism types of  $n$-marked groups satisfying $\theta$ is closed in $\Gn$.}
This fact is slightly weaker than Proposition~\ref{univ-closed}: for example, it does not not imply
closedness of the set $\K$ of isomorphism types of $n$\nobreakdash-marked centerless groups, because
the class of centerless groups is not closed under subgroups and therefore
is not universally axiomatizable. However, Proposition~\ref{univ-closed} implies
that $\K$ is closed in $\Gn$, because for any finite $C$
the class of $C$-marked centerless groups is axiomatizable by the universal sentence
$$\forall v(\bigwedge_{c\in C}[v,c]=1\to v=1).$$
Note that $\K$
is not open in $\Gn$ if $n=|C|>1$.
Indeed, let $G$ be a free group of rank $n$,
and $N_k$ a free $k$-nilpotent group of rank $n$;
then $N_k\simeq G/G_k$, where $G_k$ is the $k$-th member of the lower central
series of $G$.
Consider $G$ and $N_k$ as groups with marked free generators.
Then $G$ is a limit of the sequence $N_1, N_2,\dots$;
this follows from the well-known fact that $\bigcap_{k=1}^\infty G_k=1$.
But $G$ is centerless, and all $N_k$ are not.}
\end{remark}

For a variety $\V$ of $L$-structures,
we call a $\V$-free structure generated by the empty set
a $\V$-free minimal structure.
Denote by $\V_\star$ the set of isomorphism types of
minimal $L$-structures from $\V$.

\begin{proposition}\label{fp}
Let $\V$ and $\W$ be varieties of $L$\nobreakdash-struc\-tures, and $\V\subseteq\W$.
The following are equivalent:
\begin{enumerate}
\item
$\V_\star$ is clopen in $\W_\star$;
\item
the $\V$-free minimal structure $N$
is finitely presentable in  $\W$.
\end{enumerate}
\end{proposition}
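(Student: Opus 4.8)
The plan is to relate clopenness of $\V_\star$ in $\W_\star$ to a finite axiomatization of $\V$ relative to $\W$ in the quantifier-free theory of the free minimal structure, and then to identify such a finite axiomatization with a finite presentation of $N$ in $\W$. First I would fix notation: let $N$ be the $\V$-free minimal structure and observe that $\qf(N)\in\W_\star$, since $N\in\W$; write $p=\qf(N)\in\SL$. The key observation is that $N$ is, up to isomorphism, the unique minimal structure in $\W$ whose quantifier-free theory contains a certain set of ``relations'': indeed a minimal $\W$-structure $M$ is a quotient of $N$ (by the universal property of the free structure in $\W$, applied to the empty generating set once we fix the obvious map on ground terms), so $\qf(M)\supseteq\{\,t_1=t_2 : t_1^N=t_2^N\,\}$ together with the corresponding atomic relational facts of $N$; conversely if $M\in\W_\star$ satisfies all of $\at(N)$ then the map $t^N\mapsto t^M$ is an isomorphism by Proposition~\ref{qf}. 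Thus $\V_\star = \{\qf(M) : M\in\W_\star,\ M\models\at(N)\}$, because $M\in\V$ iff $M$ satisfies the defining identities of $\V$ iff $M$ is a quotient of $N$ that still lies in $\V$ iff $\at(M)\supseteq\at(N)$.

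Next I would translate ``finitely presentable in $\W$'' into this language. By definition $N$ is finitely presentable in $\W$ if it is presented, as a $\W$-structure, by finitely many generators and relations; since $N$ is generated by $\emptyset$, the generators contribute the ground terms $\TL$ and the relations can be taken to be finitely many atomic or negated-atomic (or rather: finitely many equations $t_i=s_i$ and relational atoms $R(\bar t)$) sentences $\psi_1,\dots,\psi_k$ valid in $N$ such that every $\W$-structure satisfying $\psi_1,\dots,\psi_k$ admits $N$ as a retract — equivalently, the smallest congruence on the absolutely free term structure forced by $\W$ and by $\psi_1\wedge\dots\wedge\psi_k$ is exactly the kernel congruence of $N$. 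Setting $\psi=\psi_1\wedge\dots\wedge\psi_k$, the content of finite presentability is precisely: a minimal $\W$-structure $M$ satisfies $\at(N)$ if and only if $M\models\psi$. Combined with the previous paragraph this gives $\V_\star = U_\psi\cap\W_\star$, which is clopen in $\W_\star$ since $U_\psi$ is clopen in $\SL$; this proves $(2)\Rightarrow(1)$.

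For $(1)\Rightarrow(2)$, assume $\V_\star$ is clopen in $\W_\star$. Since $\W_\star$ is closed in $\SL$ (Proposition~\ref{univ-theory}, as varieties are universally — indeed equationally — axiomatizable) and $\V_\star$ is clopen in it, there is by the description of clopen sets in a closed subspace of a Stone space (recalled in the text) a quantifier-free sentence $\phi$ with $\V_\star = U_\phi\cap\W_\star$; writing $\phi$ as a finite disjunction of members of $T$ and noting $p=\qf(N)\in\V_\star$, I may pick one disjunct $\psi\in T$ with $\psi\in p$ and $U_\psi\cap\W_\star\subseteq\V_\star$ — here I would need a short argument that such a single conjunctive disjunct can be chosen so that still $U_\psi\cap\W_\star\subseteq U_\phi\cap\W_\star=\V_\star$, using that $\W_\star$ splits as the union of the $U_{\psi_j}\cap\W_\star$ over the disjuncts and that $\V_\star$, being clopen, is a union of basic clopens. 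Then $\psi$ is a finite conjunction of atomic and negated-atomic sentences holding in $N$, and since $U_\psi\cap\W_\star = \V_\star = \{\qf(M):M\in\W_\star,\ M\models\at(N)\}$, every minimal $\W$-structure satisfying $\psi$ also satisfies $\at(N)$, i.e.\ is a quotient of $N$; hence $N$ together with the relations in $\psi$ presents $N$ in $\W$ — but one must drop the negated-atomic conjuncts of $\psi$, which is legitimate because a presentation uses only positive relations: if $M\models\psi$ for all minimal $\W$-structures forces $M$ to be a quotient of $N$, then in particular so does the positive part of $\psi$ together with the requirement that the negated atoms of $\psi$ not collapse, and one checks $N$ is exactly the $\W$-structure presented by generators $\TL$ and the equational/relational part of $\psi$. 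This last bookkeeping — handling the negated-atomic conjuncts and confirming that the resulting finite positive data genuinely presents $N$ in $\W$ rather than some proper quotient — is the main obstacle, and I expect the clean way through it is to argue directly with the congruence-lattice characterization of free objects in a variety: $N$ is $\langle \emptyset\mid R\rangle_\W$ where $R$ is the set of atoms of $\at(N)$, and finite presentability amounts to compactness of the corresponding congruence, which the clopen hypothesis delivers.
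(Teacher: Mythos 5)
There are two genuine problems here. First, your identification $\V_\star=\{\qf(M):M\in\W_\star,\ M\models\at(N)\}$ is wrong: by the paper's definition $\at(N)$ contains the \emph{negated} atomic sentences true in $N$ as well as the atomic ones, and since $\at(N)$ decides every atomic sentence, any minimal $M$ with $\at(M)\supseteq\at(N)$ has $\at(M)=\at(N)$ and hence $M\simeq N$ by Proposition~\ref{qf}. So the set you wrote down is the singleton $\{\qf(N)\}$, not $\V_\star$ (take $\W$ to be all $n$-marked groups and $\V$ the abelian ones to see the difference). What is true, and what your argument needs, is that a minimal $M\in\W$ lies in $\V$ iff $M$ satisfies every \emph{positive} atomic sentence true in $N$: that is exactly the condition for $t^N\mapsto t^M$ to be a well-defined surjective homomorphism, i.e.\ for $M$ to be a quotient of the $\V$-free structure $N$. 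With that correction your $(2)\Rightarrow(1)$ goes through and coincides with the paper's: if $N$ is presented in $\W$ by atomic sentences $\phi_1,\dots,\phi_n$, then $\V_\star=U_{\phi_1\wedge\dots\wedge\phi_n}\cap\W_\star$.

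Second, and more seriously, in $(1)\Rightarrow(2)$ you correctly isolate the real difficulty --- the chosen disjunct $\psi=\phi_1\wedge\dots\wedge\phi_n\wedge\neg\psi_1\wedge\dots\wedge\neg\psi_k$ has negated atomic conjuncts, while a presentation may use only positive relations --- but you do not resolve it; the appeal to ``compactness of the corresponding congruence'' is a hope, not an argument, and this step is precisely where the content of the direction lies. The paper closes the gap as follows: let $M$ be the minimal structure presented in $\W$ by the positive conjuncts $\phi_1,\dots,\phi_n$ alone. Since $N\models\phi_i$ for all $i$, there is a surjective homomorphism $M\to N$; since homomorphisms preserve atomic formulas and $N\models\neg\psi_j$, each $\neg\psi_j$ already holds in $M$. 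Hence $M\models\psi$, so $\qf(M)\in U_\psi\cap\W_\star\subseteq\V_\star$, i.e.\ $M\in\V$; freeness of $N$ then gives a surjection $N\to M$, the two surjections compose to the identity on ground-term values, and so $N\simeq M$ is finitely presented in $\W$ by $\phi_1,\dots,\phi_n$. (One small point in your favour: the ``short argument'' you request for choosing the disjunct is unnecessary, since $U_\psi\subseteq U_\phi$ for any disjunct $\psi$ of $\phi$, so any disjunct true in $N$ works.)
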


\begin{proof}
$(2)\Rightarrow(1).$
Suppose $N$ is finitely presented in $\W$
by atomic $L$-sentences $\phi_1,\dots,\phi_n$.
Then $$\V_\star=\ModS(\phi)\cap\W_\star,$$  where $\phi=\bigwedge_i\phi_i$.
So $V_\star$ is clopen in $\W_\star$.

$(2)\Rightarrow(1).$
Suppose $\V_\star$ is clopen in $\W_\star$.
Since $\W_\star$ is closed by Proposition~\ref{univ-theory},
$\V_\star=\ModS(\phi)\cap\W_\star,$
for some $\phi\in\QFL$.
We may assume that $\phi$
is a finite disjunction of sentences of the form
$$\phi_1\wedge\dots\wedge\phi_n\wedge
\neg\psi_1\wedge\dots\wedge\neg\psi_k,$$
where all $\phi_i, \psi_j$ are atomic $L$-sentences.
Then one of these disjuncts --- say, the disjunct written above--- holds in $N$.
Let $M$ be the minimal $L$-structure
presented in $\W$ by the relations $\phi_1,\dots,\phi_n$.
Then there is a homomorphism from $M$ onto $N$.
Hence all $\neg\psi_i$ hold in $M$. Therefore $\phi$ holds in $M$,
and so $M\in \V$.
Since $N$ is $\V$-free there is a homomorphism from $N$ onto $M$.
Hence this homomorphism is an isomorphism.
Thus $N$ is finitely presented in $\W$.
\end{proof}

\begin{corollary}
Let $\V$ be a group variety, and $n\ge 1$. Then the set of isomorphism types of $n$-marked $\V$-groups
is clopen in $\Gn$ iff the $\V$-free group of rank $n$ is finitely presented.
\end{corollary}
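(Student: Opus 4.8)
The plan is to obtain this as a direct specialization of Proposition~\ref{fp}. Fix a set $C$ of $n$ constant symbols, put $L_0=\{\cdot\,,^{-1}, e\}$ and $L=L_0(C)$. Let $\W$ be the class of all $L$-structures whose $L_0$-reduct is a group, and let $\V$ be the class of all $L$-structures whose $L_0$-reduct lies in the given group variety. Since the group axioms and the identities of a group variety are identities, $\W$ is a variety of $L$-structures and $\V$ is a subvariety of it. A minimal $L$-structure lies in $\W$ exactly when it is a $C$-marked group, so $\W_\star=\GC=\Gn$; likewise a minimal $L$-structure lies in $\V$ exactly when it is an $n$-marked $\V$-group, so $\V_\star$ is the set of isomorphism types of $n$-marked $\V$-groups. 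By Proposition~\ref{fp}, $\V_\star$ is clopen in $\Gn$ if and only if the $\V$-free minimal $L$-structure $N$ is finitely presentable in $\W$.

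It remains to translate ``$N$ is finitely presentable in $\W$'' into ordinary group-theoretic language. The structure $N$ is the relatively free group of the given variety on the $n$ free generators $c^N$ $(c\in C)$; when the variety is that of all groups, $N$ is the free group of rank $n$. Modulo the group axioms every ground $L$-term equals a group word in the constants from $C$, and an atomic $L$-sentence is an equation $t=s$ between two such terms. Hence presenting $N$ in $\W$ by finitely many atomic $L$-sentences $t_i=s_i$ is literally the same as giving $N$, as a group, by a finite presentation on the generating set $\{c^N:c\in C\}$ with relators $t_i s_i^{-1}$. Thus ``$N$ is finitely presentable in $\W$'' is equivalent to ``$N$ has a finite presentation on this particular $n$-element generating set.''

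Finally, since $N$ is $n$-generated, having a finite presentation on that $n$-element generating set is equivalent to being finitely presented as an abstract group: this is the standard fact, provable by Tietze transformations, that a finitely generated group which is finitely presented is finitely presented with respect to every finite generating set. Combining this with the two previous paragraphs yields the corollary. Essentially everything here is bookkeeping, and the only point that requires a genuine argument rather than a direct appeal to Proposition~\ref{fp} is this last passage between ``finitely presented on a prescribed finite generating set'' and ``finitely presented.''
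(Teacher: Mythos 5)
Your proof is correct and follows the same route the paper intends: the paper states this corollary without proof as an immediate specialization of Proposition~\ref{fp}, and you have simply filled in the bookkeeping (identifying $\W_\star$ with $\Gn$, $\V_\star$ with the $n$-marked $\V$-groups, and the $\V$-free minimal $L$-structure with the relatively free group of rank $n$). Your explicit appeal to Tietze transformations to pass from ``finitely presented on the prescribed $n$-element generating set'' to ``finitely presented'' is exactly the one nontrivial translation step, and it is handled correctly.
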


For example, if $\V$ is any nilpotent group variety then the class of $n$-marked
$\V$\nobreakdash-groups is clopen in $\Gn$. Since for $n,m\ge 2$ the $n$-generated free $m$-solvable
group is not finitely presented~\cite{Shm}, the class of $n$-marked $m$-solvable
groups is not open in $\Gn$.
The latter fact was explained in \cite[Section 2.6]{ChG} in a completely different way
based on some D.~V.~Osin's result.
Note that there is an open question posed by A.~Yu.~Olshanski
whether any finitely presented relatively free group is virtually nilpotent.

\subsection{Compactness of $\Gn$ and Gromov's theorem}
Now we explain how one can use compactness of  $\Gn$
to deduce from Gromov's theorem its `bounded' version formulated at the beginning of
the present paper.

Fix $n$, $k$, and $d$.
Let
$L_0=\{\cdot,^{-1},e\}$, and $L=L_0(C)$, where
$C=\{c_1,\dots,c_n\}$.

It is easy to construct $\sigma_m\in\QFL$
which says about a $C$-marked group that for all $r=1,\dots,m$
the size of the ball of radius $r$ centered at the identity is $\le kr^d$.
Also, it is not difficult to construct $\tau_m\in\QFL$
which says about a $C$-marked group that
it has a nilpotent subgroup of class $\le m$ and index $\le m$ (see \cite[Section~7]{DW}).

Let $\phi_m$ denote $\sigma_m\to\tau_m$.
It is easy to see that if $m<l$ then $\phi_m$ implies $\phi_l$.
Every $C$-marked group $M$ satisfies $\phi_m$
for some~$m$ (possibly, depending on $M$).
Indeed, if $M$ is virtually nilpotent then $M\models\tau_m$ for some $m$;
if $M$ is not virtually nilpotent then, by Gromov's theorem, $M$ is not of polynomial growth,
and therefore $M\nvDash~\sigma_m$, for some $m$.

Let $\K_m$ denote $U_{\phi_m}\cap\GC$.
Then
$\K_m$ is clopen in $\GC$.
Thus $\{\K_m: m\ge 1\}$ is an open cover of $\GC$. It has a finite subcover because
$\GC$ is compact.
Since $\K_m\subseteq \K_{m+1}$ for all $m$,
we have
$\GC=\K_m$ for some $m$.
Thus there is $m$ such that every $C$-marked group satisfies $\sigma_m\to\tau_m$,
and the result follows.

\begin{remark}
\emph{Note that the proof above is based on compactness of the Stone space of $C$-marked groups,
which follows from a general fact on compactness of Stone spaces of Boolean algebras.
For a proof of the latter fact one needs only Zorn's lemma but not
model-theoretic compactness theorem.
The proof of the result given in \cite[Section 7]{DW} is based on model-theoretic compactness theorem;
so our proof is different, even though uses the same idea.}
\end{remark}

Another way to realize that idea is to use ultraproducts.
Towards a contradiction,  suppose for every $i$ there is a $C$-marked group $M_i$ with
$M_i\models\neg\phi_i$.
If $j>i$ then $M_j\models~\neg\phi_i$ because
$\phi_i$ implies $\phi_j$.
Then, by the \L o\'{s} theorem, for any non-principal ultraproduct $M$ of the $C$-marked groups $M_i$
we have $M\models\neg\phi_j$, for all $j$.
Then all $\phi_j$ fail in the $C$-marked group $\core(M)$, contrary to Gromov's theorem.
\bigskip

\subsection{Topology on $\ML$ and ultraproducts}
In general, there is a link between ultraproducts and the Stone topology on the class of minimal structures
(cf. \cite[Proposition~6.4]{ChG}, where a link
between ultraproducts and convergence of groups in the space of marked groups had been
demonstrated).

\begin{proposition}\label{clo-ultra}
Let $X$ be a subset of $\ML$, and $M\in\ML$. Then the following are equivalent:
\begin{enumerate}
\item
$M$ belongs to the closure of  $X$ in the Stone space $\ML$;
\item
$M$ is isomorphic to the core of an ultraproduct of structures from $X$;
\item
$M$ is embeddable into an ultraproduct of structures from $X$.

\end{enumerate}
\end{proposition}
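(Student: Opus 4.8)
The plan is to prove the cycle $(2)\Rightarrow(3)\Rightarrow(1)\Rightarrow(2)$, using Herbrand's theorem (Corollary~\ref{herbrand}) and the \L o\'s theorem as the main tools, and keeping in mind that all of $(1)$, $(2)$, $(3)$ are invariant under isomorphism on the $M$-side.

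The implication $(2)\Rightarrow(3)$ is immediate, since $\core(N)$ embeds into $N$ for any $L$-structure $N$, so if $M\simeq\core(\prod_{\mathcal U}M_i)$ then $M$ embeds into $\prod_{\mathcal U}M_i$. For $(3)\Rightarrow(1)$, suppose $M$ embeds into an ultraproduct $N=\prod_{\mathcal U}M_i$ with all $M_i\in X$. To show $M$ is in the closure of $X$, I must show every basic open neighborhood $\ModM(\phi)$ of $M$, with $\phi\in\QFL$, meets $X$. Since $M\models\phi$ and $\phi$ is quantifier-free, the embedding $M\hookrightarrow N$ gives $N\models\phi$ (quantifier-free sentences are preserved downward and upward along embeddings). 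By \L o\'s, $\{i:M_i\models\phi\}\in\mathcal U$, in particular is nonempty, so some $M_i\in X$ lies in $\ModM(\phi)$. Hence $M$ is in the closure.

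The substantive step is $(1)\Rightarrow(2)$. Suppose $M$ is in the closure of $X$ in the Stone space $\ML$. Consider the quantifier-free type $\qf(M)$ together with a way of indexing the cover: for each finite $\Delta\subseteq\qf(M)$, writing $\phi_\Delta=\bigwedge\Delta$, the fact that $M\in\overline{X}$ means $\ModM(\phi_\Delta)\cap X\neq\emptyset$, so we may pick $M_\Delta\in X$ with $M_\Delta\models\phi_\Delta$. Let $I$ be the set of finite subsets of $\qf(M)$, directed by inclusion, and for each $\Delta\in I$ let $\hat\Delta=\{\Delta'\in I:\Delta\subseteq\Delta'\}$; the family $\{\hat\Delta:\Delta\in I\}$ has the finite intersection property, so extend it to an ultrafilter $\mathcal U$ on $I$. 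Form $N=\prod_{\mathcal U}M_\Delta$. For each single sentence $\phi\in\qf(M)$, the set $\widehat{\{\phi\}}\subseteq\{\Delta:M_\Delta\models\phi\}$ lies in $\mathcal U$, so by \L o\'s $N\models\phi$; thus $\qf(M)\subseteq\qf(N)$, and since $\qf(M)$ is a maximal finitely satisfiable set (Proposition~\ref{max}) while $\qf(N)\supseteq\core$-consistent, we get $\qf(M)=\qf(\core(N))$ because $\core(N)$ and $N$ satisfy the same quantifier-free sentences. By Proposition~\ref{qf}, $M\simeq\core(N)$, which is $(2)$.

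The main obstacle is purely bookkeeping: making sure that ``satisfies the same quantifier-free sentences'' is used correctly between $N$ and $\core(N)$ — note $\core(N)$ is a substructure of $N$, so they agree on quantifier-free sentences without parameters, which is exactly what $\qf$ records — and that the maximality clause of Proposition~\ref{max} forces $\qf(M)=\qf(\core(N))$ rather than merely containment. One small subtlety to flag: the indexing set $I$ could be finite (if $L$ is tiny), in which case $\mathcal U$ is principal and $N$ is just some single $M_\Delta$ with $\qf(M)\subseteq\qf(M_\Delta)$, forcing $M\simeq\core(M_\Delta)=M_\Delta$ since $M_\Delta$ is already minimal; this degenerate case is harmless. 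No genuinely hard estimate or construction is needed beyond a standard ultrafilter argument.
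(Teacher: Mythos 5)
Your proof is correct and follows essentially the same route as the paper: the identical cycle of implications, with $(3)\Rightarrow(1)$ via the \L o\'{s} theorem and $(1)\Rightarrow(2)$ by extending a family of index sets with the finite intersection property to an ultrafilter, applying \L o\'{s}, and concluding with Proposition~\ref{qf}; the only immaterial difference is that you index the ultraproduct by finite subsets of $\qf(M)$ rather than by $X$ itself. One clause in your last step is garbled (``$\qf(N)\supseteq\core$-consistent''): it should simply read $\qf(\core(N))=\qf(N)\supseteq\qf(M)$, whence equality because both sides are maximal finitely satisfiable subsets of $\QFL$.
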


\begin{proof}
Obviously, $(2)\Rightarrow(3)$.

$(3)\Rightarrow(1)$.
Suppose $M$ is embeddable into an ultraproduct $\prod_{i\in I}M_i/D$
of structures from $X$.
We show that any  basic neighbourhood  $\ModM(\phi)$
of $M$, where $\phi\in\QFL$, contains an element of $X$.
Since $\phi$ is quantifier-free and holds in $M$,
it holds in the ultraproduct.
Therefore,
by  the \L o\'{s} theorem,
$$I_\phi=\{i\in I: M_i\models\phi\}\in D.$$
Hence $I_\phi\ne\emptyset$, and so $\ModM(\phi)$
contains an element of $X$.
\medskip

$(1)\Rightarrow(2).$ Let $X=\{M_i: i\in I\}$.
For $\phi\in\qf(M)$ denote
$$I_\phi=\{i\in I: M_i\models\phi\};$$
then $I_\phi\ne\emptyset$,
because $X\cap U_\phi\ne\emptyset$, by~(1).
The set $$P=\{I_\phi:\phi\in\qf(M)\}$$ is closed under finite intersections,
because if $\phi_1,\dots,\phi_n\in\qf(M)$ then
$$I_{\phi_1}\cap\dots\cap I_{\phi_n}= I_{\phi_1\wedge\dots\wedge\phi_n},\quad\text{and}\quad
\phi_1\wedge\dots\wedge\phi_n\in\qf(M).$$
Therefore $P$ has the finite intersection
property, and hence can be completed to an ultrafilter $D$ on $I$.
For any $\phi\in \qf(M)$ we have $I_\phi\in D$, and hence $\prod_{i\in I}M_i/D\models\phi$,
by  the \L o\'{s} theorem.
It follows that any $\phi\in \qf(M)$ holds in the core of the ultra\-product. Therefore $M$ is
isomorphic to the core,
by Proposition~\ref{qf}.
\end{proof}

A point $M$ of the Stone space $\ML$ is called a \emph{limit point} of a subset $X$ of $\ML$
if every open neighbourhood of $M$ in $\ML$ contains a member of $X$ which is non-isomorphic to $M$.

\begin{proposition}\label{limit-ultra}
Let $X$ be a subset of $\ML$, and
$M$ a structure in $\ML$, which is non-isomorphic to any member of $X$.
Then the  following are equivalent:
\begin{enumerate}
\item
$M$ is a limit point of $X$ in the Stone space $\ML$;
\item
$M$ is isomorphic to the core of a non-principal ultraproduct of pairwise non-isomorphic structures from $X$;
\item
$M$ is embeddable into a non-principal ultraproduct of pairwise non-isomorphic structures from $X$.

\end{enumerate}
\end{proposition}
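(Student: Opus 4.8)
The plan is to mirror the structure of the proof of Proposition~\ref{clo-ultra}, adding the bookkeeping needed to control non-isomorphism and non-principality. As before, $(2)\Rightarrow(3)$ is immediate, so the work is in $(3)\Rightarrow(1)$ and $(1)\Rightarrow(2)$.

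For $(3)\Rightarrow(1)$, suppose $M$ embeds into a non-principal ultraproduct $\prod_{i\in I}M_i/D$ of pairwise non-isomorphic structures from $X$. Fix a basic neighbourhood $\ModM(\phi)$ of $M$ with $\phi\in\QFL$; we must find a member of $X$ in it that is non-isomorphic to $M$. By the {\L}o\'s theorem, $I_\phi=\{i: M_i\models\phi\}\in D$. Since $D$ is non-principal, $I_\phi$ is infinite, so it contains infinitely many indices $i$; as the $M_i$ are pairwise non-isomorphic, at most one of them is isomorphic to $M$, so some $M_i$ with $i\in I_\phi$ is non-isomorphic to $M$ and lies in $X\cap\ModM(\phi)$. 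Hence $M$ is a limit point of $X$.

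For $(1)\Rightarrow(2)$, write $X=\{M_i:i\in I\}$, and proceed as in Proposition~\ref{clo-ultra}: for $\phi\in\qf(M)$ put $I_\phi=\{i\in I:M_i\models\phi\}$. Now $I_\phi$ is not merely nonempty but infinite: since $M$ is a limit point, $U_\phi$ (more precisely $\ModM(\phi)$) contains members of $X$ non-isomorphic to $M$, and because $X$ consists of pairwise non-isomorphic structures, there must be infinitely many distinct indices contributing such members --- here I would argue that if only finitely many indices $i_1,\dots,i_k$ satisfied $M_{i_j}\models\phi$, then, taking $\psi$ to be a quantifier-free sentence in $\qf(M)$ distinguishing $M$ from each of the finitely many $M_{i_j}$ that happen to be non-isomorphic to $M$ (such $\psi$ exist by Proposition~\ref{qf}), the neighbourhood $\ModM(\phi\wedge\psi)$ of $M$ would contain no member of $X$ non-isomorphic to $M$, contradicting that $M$ is a limit point. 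So $P=\{I_\phi:\phi\in\qf(M)\}$ consists of infinite sets and is closed under finite intersections, hence generates a filter all of whose members are infinite; extend it to an ultrafilter $D$ on $I$ containing the Fr\'echet filter, so $D$ is non-principal. Passing to the quotient $X$ modulo isomorphism only changes the index set, not the argument, so one may also assume the chosen $M_i$ are pairwise non-isomorphic. As in Proposition~\ref{clo-ultra}, every $\phi\in\qf(M)$ holds in $\prod_{i\in I}M_i/D$ by {\L}o\'s, hence in its core, so $M$ is isomorphic to that core by Proposition~\ref{qf}.

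The main obstacle is the bookkeeping around pairwise non-isomorphism: one must ensure both that the ultraproduct is genuinely built from pairwise non-isomorphic structures (which may require thinning $X$ or reindexing, discarding the at-most-one factor isomorphic to $M$) and that the ultrafilter $D$ can simultaneously be made non-principal --- the key point being that limitpointhood forces every $I_\phi$ to be infinite, so the generated filter is compatible with the Fr\'echet filter. Once that is set up, the {\L}o\'s-theorem computations are exactly as in the proof of Proposition~\ref{clo-ultra}.
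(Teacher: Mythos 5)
Your proof is correct and follows essentially the same route as the paper's: the \L o\'s theorem for $(3)\Rightarrow(1)$ using that a non-principal ultrafilter contains no finite sets, and for $(1)\Rightarrow(2)$ the same contradiction argument (a conjunction of sentences from $\qf(M)$ distinguishing $M$ from the finitely many factors satisfying $\phi$) to show each $I_\phi$ is infinite, followed by extending $P$ together with the Fr\'echet filter to a non-principal ultrafilter. The only blemish is the phrase ``because $X$ consists of pairwise non-isomorphic structures,'' which is not a hypothesis; the paper avoids this by fixing from the outset a family of representatives of the isomorphism types occurring in $X$ (as your subsequent argument in effect does), and your distinguishing-sentence argument works verbatim in that setting, so the gap is only presentational.
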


\begin{proof}
Obviously, $(2)\Rightarrow(3)$.

$(3)\Rightarrow(1)$.
Suppose $M$ is embeddable into
$\prod_{i\in I}M_i/D$, where $\{M_i:i\in I\}$ is a family of pairwise non-isomorphic
structures from $X$, and $D$ is a non-principal ultrafilter on $I$.
We need to show that any  basic neighbourhood $\ModM(\phi)$ of $M$ contains an element of $X$
non-isomorphic to $M$.
Since $\phi$ is quantifier-free and holds in $M$,
it holds in the ultraproduct.
Therefore,
by  the \L o\'{s} theorem,
$$I_\phi=\{i\in I: M_i\models\phi\}\in D.$$
Since the ultrafilter $D$ is non-principal, $|I_\phi|>1$.
Since all $M_i$ are pairwise non-isomorphic, there is $j\in I_\phi$ such that
$M_j$ is not isomorphic to $M$. Then
$M_j\in X$, and $M_j\in\ModM(\phi)$.

$(1)\Rightarrow(2)$.
Let $\{M_i: i\in I\}$ be a family of representatives of all isomorphism types of structures in $X$,
which are not isomorphic to $M$. For any $\phi\in\qf(M)$, the set
$$I_\phi =\{i\in I: M_i\models\phi\}$$
is infinite. Indeed, suppose not.
By Proposition~\ref{qf},
for each $i$ there is $\theta_i\in\qf(M)$ such that $M_i\nvDash\theta_i$.
Since $M$ is a limit point of $X$, there is $N\in X$ which is
non-isomorphic to $M$ and such that $N\models\bigwedge_i\theta_i$.
Then none of $M_i$ is isomorphic to $N$. Contradiction.
The set $$P=\{I_\phi:\phi\in\qf(M)\}$$ is closed under finite intersections,
as in the proof of  $(1)\Rightarrow(2)$ at Proposition~\ref{clo-ultra}.
Let $F$ be the Fr\'{e}chet filter on $I$.
The set $P\cup F$ has the finite intersection property:
otherwise,  for some $\phi\in\qf(M)$
the set $I_\phi$ is disjoint with a set from $F$, and hence is finite.
Hence $P\cup F$ is contained in an ultrafilter $D$ on $I$.
The ultrafilter $D$ is non-principal because it contains $F$.
For any $\phi\in\qf(M)$ we have $I_\phi\in D$, and therefore $\prod_{i\in I}M_i/D\models\phi$,
by  the \L o\'{s} theorem.
It follows that any $\phi\in\qf(M)$ holds in the core of the ultraproduct. Therefore $M$ is
isomorphic to the core, by Proposition~\ref{qf}.
\end{proof}

\section{The ultrametric space of minimal structures}\label{ultrametric}

For $m\ge 1$,
we say that $L$-structures $M$ and $N$ are $m$\nobreakdash-\emph{close} if
$$M\models\theta\Leftrightarrow N\models\theta,$$
for any atomic $L$-sentence $\theta$ of length $\le m$.

Note that minimal $L$-structures $M$ and $N$ are $m$-close for arbitrary large $m$
iff $\at(M)=\at(N)$ iff $M\simeq N$, by Proposition~\ref{qf}.

For minimal $L$-structures $M$ and $N$ we define $d(M, N)$,
the distance between $M$ and $N$, as  follows.
If $M\simeq N$, put $d(M,N)=0$. Otherwise
$d(M,N)$ is defined to be $1/m$,
where $m$ be the maximal positive integer such that $M$ and $N$ are $m$-close.

It is easy to see that
$d$ is \emph{semi-ultrametric} on $\ML$, that is,
for any $M,N,Q\in\ML$
\begin{enumerate}
\item
$d(M,N)\ge 0$, and $d(M,M)=0$;
\item
$d(M,N)=d(M,N)$;
\item
$d(M,P)\le\max\{d(M,N), d(N,Q)\}$.
\end{enumerate}

Since $d(M,N)=0$ iff $M\simeq N$, the semi-ultrametric $d$ induces an ultrametric on
the set of isomorphism types of minimal $L$-structures, that is, on $\SL$.
We denote the induced ultrametric on $\SL$ by the same letter $d$;
so for any $S,P\in\SL$, we have
$d(S,P)=d(M,N)$, where $S=\qf(M)$ and $P=\qf(N)$.

Clearly, for any $S,P\in \SL$,  we have
\begin{enumerate}
\item[(i)]
$d(S,P)\in\{1/m: m\ge 1\}\cup\{0\}$, and
\item[(ii)]
$d(S,P)\le 1/m$ means that
$\theta\in S$ iff $\theta\in P$,
for any atomic $L$-sentence $\theta$ of length $\le m$.
\end{enumerate}

We call  $(\ML,d)$ and
$(\SL,d)$ the \emph{semi-ultrametric} and \emph{ultrametric space of minimal $L$-structures},
respectively.

Clearly, in $\ML$ and $\SL$ for any point $x$ the open ball $B(x,\varepsilon)$ is the whole space if
$\varepsilon>1$. If for a positive integer $m$
$$1/(m+1)<\varepsilon\le 1/m,$$
then the open ball $B(x,\varepsilon)$ is equal to the closed ball $\bar{B}(x,1/m)$.
Thus in the spaces $\ML$ and $\SL$ any open ball is a closed set. It follows that
\emph{the metric space $\SL$ is totally disconnected}.

\begin{proposition}\label{clopen}
For any $\phi\in\QFL$,
the set\ \ $\ModS(\phi)$ is clopen in the ultrametric space $\SL$.
\end{proposition}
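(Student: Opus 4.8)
The plan is to argue directly that $\ModS(\phi)=U_\phi$ is open in the ultrametric space $\SL$, and then to observe that its complement has the same form, so that $U_\phi$ is closed as well.

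First I would put $\phi$ into a convenient shape. Being quantifier-free, $\phi$ is a Boolean combination (via $\wedge$, $\vee$, $\neg$) of finitely many atomic $L$-sentences $\theta_1,\dots,\theta_k$; I fix such a representation and let $m$ be the maximum of the lengths of $\theta_1,\dots,\theta_k$. (The degenerate case in which $\phi$ involves no atomic subsentence is trivial, since then $U_\phi$ is $\SL$ or $\emptyset$; and as $L$ has a constant symbol there is at least one atomic sentence, e.g.\ $c=c$, so one may always take $m\ge 1$.) The key point is that whether $\phi\in S$ holds is computed from the truth values ``$\theta_i\in S$'' by the same fixed Boolean combination, using the rules relating $\wedge,\vee,\neg$ to membership in $S$. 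Hence, if $S,P\in\SL$ satisfy $d(S,P)\le 1/m$, then $S$ and $P$ contain exactly the same atomic sentences of length $\le m$, in particular the same ones among $\theta_1,\dots,\theta_k$, so $\phi\in S$ iff $\phi\in P$; that is, $S\in U_\phi$ iff $P\in U_\phi$.

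This shows that $\bar B(S,1/m)\subseteq U_\phi$ whenever $S\in U_\phi$. Since in $\SL$ the closed ball $\bar B(S,1/m)$ coincides with the open ball $B(S,\varepsilon)$ for any $\varepsilon$ with $1/(m+1)<\varepsilon\le 1/m$, as observed just before the proposition, $U_\phi$ contains an open ball around each of its points and is therefore open in the ultrametric space $\SL$. Applying the same reasoning to $\neg\phi$, which is a Boolean combination of the same atomic sentences $\theta_1,\dots,\theta_k$, shows that $\SL\setminus U_\phi=U_{\neg\phi}$ is open as well; hence $\ModS(\phi)=U_\phi$ is clopen.

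I do not expect a genuine obstacle here. The only place requiring a little care is the choice of the bound $m$: it must be large enough that $d$-closeness at scale $1/m$ forces agreement on all of $\theta_1,\dots,\theta_k$ simultaneously, which is exactly why $m$ is taken to be the maximum of their lengths.
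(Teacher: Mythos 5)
Your proof is correct and follows essentially the same route as the paper's: the core step in both is that $d(S,P)\le 1/m$ forces $S$ and $P$ to agree on all atomic sentences of length $\le m$, so a suitable ball around each point of $U_\phi$ stays inside $U_\phi$. The only (cosmetic) difference is that the paper first reduces to atomic $\phi$ by noting that Boolean combinations of clopen sets are clopen, whereas you absorb the Boolean combination directly by taking $m$ to be the maximum length of the atomic subsentences.
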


\begin{proof}
Since a boolean combination of clopen sets is clopen, we may assume that $\phi$ is atomic.
Let $m$ be the length of $\phi$.
Denote $\ModS(\varphi)$ by $U$.
For any $S\in\ML$, if $\phi\in S$
then
$B(S,1/m)\subseteq U,$
and if
$\phi\notin S$ then $B(S,1/m)\subseteq U^c.$
So $U$ is clopen.
\end{proof}

Since $\{\ModS(\phi): \phi\in\QFL\}$ is a basis of the Stone topology on $S_L$, we have

\begin{corollary}\label{finer}
The ultrametric topology is equal to or finer than
the Stone topology on~$S_L$.
\end{corollary}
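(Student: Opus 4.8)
The plan is to read this off directly from Proposition~\ref{clopen} together with the description of a basis for the Stone topology. Recall that, by construction, $\{U_\phi : \phi\in\QFL\}$ is a basis of the Stone topology $\tau$ on $\SL$, and that $U_\phi=\ModS(\phi)$ for every $\phi\in\QFL$. Proposition~\ref{clopen} asserts that each set $\ModS(\phi)$ with $\phi\in\QFL$ is clopen — in particular open — in the ultrametric space $\SL$. So every member of a basis of $\tau$ is an open set of the ultrametric topology.

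From here I would invoke the elementary fact that if a topology $\sigma$ contains a basis of a topology $\tau$, then $\tau\subseteq\sigma$: any $\tau$-open set is a union of basic sets $U_\phi$, and since each $U_\phi$ is $\sigma$-open, the union is $\sigma$-open. Applying this with $\sigma$ the ultrametric topology on $\SL$ gives that every $\tau$-open subset of $\SL$ is open in the ultrametric topology, which is precisely the statement that the ultrametric topology is finer than or equal to the Stone topology on $S_L$. There is no genuine obstacle in this deduction; all of the real work has already been done in Proposition~\ref{clopen}, whose proof uses that two minimal structures at ultrametric distance $\le 1/m$ agree on every atomic $L$-sentence of length $\le m$, so that membership of such a structure in $\ModS(\phi)$ for a length-$m$ atomic $\phi$ is constant on the ball of radius $1/m$.
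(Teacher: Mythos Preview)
Your proof is correct and follows exactly the paper's approach: since $\{\ModS(\phi):\phi\in\QFL\}$ is a basis of the Stone topology and Proposition~\ref{clopen} shows each such set is open in the ultrametric topology, the corollary follows immediately. You have simply written out the one-line deduction in slightly more detail than the paper does.
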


In general, the two topologies do not coincide:
in the Stone space $\SL$ the clopen sets are exactly $\ModS(\phi)$, where $\phi\in\QFL$, but
in the ultrametric space $\SL$ it is not always so.
For example, in Proposition~\ref{GC-clopen} we proved
that if $C$ is infinite then $\GC\ne\ModS(\phi)$, for any $\phi\in\QFL$.
However,

\begin{proposition}
The set $\GC$ is clopen in the ultrametric space $S_L$, for any $C$.
\end{proposition}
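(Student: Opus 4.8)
The plan is to handle closedness and openness separately. Closedness is immediate: $\GC=\ModS(\gamma)$ for the universal $L_0$\nobreakdash-sentence $\gamma$ axiomatizing groups, so $\GC$ is closed in the Stone topology on $\SL$ by Proposition~\ref{univ-closed}; since the ultrametric topology is finer than the Stone topology (Corollary~\ref{finer}), $\GC$ is closed in the ultrametric space $\SL$ as well.

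For openness, the key point is that, in contrast to the Stone topology, $\GC$ is cut out by atomic sentences of \emph{uniformly bounded} length. As in the proof of Proposition~\ref{GC-clopen}, let $\Theta$ be the set of atomic $L$\nobreakdash-sentences $(a\cdot b)\cdot c=a\cdot(b\cdot c)$, $c\cdot e=e\cdot c=c$, $c\cdot c^{-1}=c^{-1}\cdot c=e$ for $a,b,c$ ranging over the constant symbols of $C$. For infinite $C$ this set is infinite, so it does not yield a single sentence $\bigwedge\Theta$; but the argument in the proof of Proposition~\ref{GC-clopen} still gives $\GC=\ModS(\Theta)=\bigcap_{\psi\in\Theta}\ModS(\psi)$, an intersection of (possibly infinitely many) clopen sets. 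Crucially, there is an integer $\ell$, independent of $C$, such that every $\psi\in\Theta$ has length at most $\ell$, because each constant symbol contributes a single symbol to the length.

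Now fix $S\in\GC$. For each $\psi\in\Theta$ we have $\psi\in S$, and, $\psi$ being atomic of length $\le\ell$, the computation in the proof of Proposition~\ref{clopen} gives $B(S,1/\ell)\subseteq\ModS(\psi)$. Intersecting over all $\psi\in\Theta$ yields $B(S,1/\ell)\subseteq\GC$. Hence $\GC$ is open, and therefore clopen, in the ultrametric space $\SL$.

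The step I expect to be the main obstacle is the identity $\GC=\ModS(\Theta)$ for an arbitrary $C$, i.e. the fact that a minimal $L$\nobreakdash-structure is a group as soon as the group axioms hold for its distinguished generators and for $e$; this is exactly the content already used in the proof of Proposition~\ref{GC-clopen}, where it is invoked only for finite $C$. Granting it, the remainder is simply the observation that, unlike in the Stone topology, the finer ultrametric topology turns the intersection of the uniformly ``shallow'' clopen sets $\ModS(\psi)$, $\psi\in\Theta$, into a clopen set --- precisely the phenomenon that fails in the Stone topology when $C$ is infinite (Proposition~\ref{GC-clopen}).
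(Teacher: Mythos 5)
Your proof is correct and follows essentially the same route as the paper's: both arguments rest on writing $\GC=\bigcap_{\theta\in\Theta}\ModS(\theta)$ and on the key observation that the sentences in $\Theta$ have uniformly bounded length $\ell$, so that $B(S,1/\ell)\subseteq\GC$ for every $S\in\GC$. The only (immaterial) difference is in the closedness half, where you invoke Proposition~\ref{univ-closed} together with Corollary~\ref{finer} while the paper simply notes that $\GC$ is an intersection of the clopen sets $\ModS(\theta)$; and the identity $\GC=\ModS(\Theta)$ that you flag as the main obstacle is exactly the one the paper itself asserts and uses in Proposition~\ref{GC-clopen}.
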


\begin{proof}
Let $\Theta$ be defined as in the proof of Proposition~\ref{GC-clopen}; then
$$\GC=\ModS(\Theta)=\bigcap\{\ModS(\theta):\theta\in\Theta\}.$$
Since all $\ModS(\theta)$ are clopen, $\GC$ is closed.
Also, $\GC$ is open because if $S\in\GC$, and $m$ is the maximal length
of sentences in $\Theta$, then $B(S,1/m)\subseteq\GC$.
\end{proof}

We call a signature $L$ \emph{locally finite} if
for every $n$ the set of $n$-ary symbols in $L$ is finite.
Clearly, any locally finite signature is finite or countable.

\begin{theorem}\label{compact-S}
The following are equivalent:
\begin{enumerate}
\item
any clopen set in the ultrametric space $\SL$ is $\ModS(\phi)$ for some $\phi~\in~\QFL$;
\item
any open ball in the ultrametric space $\SL$ is $\ModS(\phi)$ for some $\phi\in\QFL$;
\item
the Stone and ultrametric topologies on $\SL$ coincide;
\item
the ultrametric space $\SL$ is compact;
\item
the ultrametric space $\SL$ is separable;
\item
the signature $L$ is locally finite.
\end{enumerate}
\end{theorem}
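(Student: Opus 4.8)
The plan is to prove the chain of implications $(6)\Rightarrow(2)\Rightarrow(1)\Rightarrow(3)\Rightarrow(4)\Rightarrow(5)\Rightarrow(6)$, or some convenient reshuffling, using Corollary~\ref{finer} as the backbone: since the ultrametric topology is always at least as fine as the Stone topology, to get $(3)$ I only need the reverse inclusion, i.e.\ that every ultrametric-open set is Stone-open. The crux of the whole theorem is the equivalence of $(6)$ (local finiteness of $L$) with the "good behaviour" conditions, and the key quantitative fact I would isolate first is this: \emph{if $L$ is locally finite, then for each $m$ there are only finitely many atomic $L$-sentences of length $\le m$ up to logical equivalence} (indeed, only finitely many ground terms of length $\le m$, since at each stage we apply one of finitely many symbols of each arity). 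Consequently, for locally finite $L$, each closed ball $\bar B(S,1/m)$ is cut out by fixing the truth values of the finitely many atomic sentences of length $\le m$, hence equals $\ModS(\phi)$ where $\phi$ is the finite conjunction of those atomic sentences or their negations that hold in $S$; and since every open ball is such a closed ball (as noted before Proposition~\ref{clopen}), this gives $(6)\Rightarrow(2)$.

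Then $(2)\Rightarrow(1)$ follows because in a compact ultrametric space — or even just arguing directly — every clopen set is a finite union of balls: a clopen set $U$ is open, hence a union of balls, and closed; but I should be careful, since I do not yet know compactness. A cleaner route: $(2)\Rightarrow(4)$ first. If every open ball is $\ModS(\phi)$ for some $\phi\in\QFL$, then every ultrametric-open set is a union of such, hence Stone-open, so by Corollary~\ref{finer} the topologies coincide, giving $(3)$; and then $(4)$ follows from Proposition~\ref{st-comp}(ii), compactness of the Stone space. So the efficient order is $(6)\Rightarrow(2)\Rightarrow(3)\Rightarrow(4)$, with $(2)\Rightarrow(1)$ tacked on once compactness is in hand (in a compact metric space a clopen set is a finite union of basic clopen balls), and $(1)\Rightarrow(3)$ is then immediate since $(1)$ says the ultrametric clopen sets — which include a basis — are all Stone-clopen. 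The implication $(4)\Rightarrow(5)$ is the standard fact that compact metric spaces are separable.

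The remaining and, I expect, the main obstacle is the converse direction $(5)\Rightarrow(6)$, equivalently $\neg(6)\Rightarrow\neg(5)$: if $L$ has infinitely many $n$-ary symbols for some fixed $n$, I must exhibit an uncountable discrete (or otherwise non-separable) subspace of $(\SL,d)$. Suppose $L$ has infinitely many $n$-ary function symbols $f_0,f_1,\dots$ (the case of relation symbols is analogous, perhaps slightly easier). Using a constant symbol $c$, the ground atomic sentences $f_i(c,\dots,c)=f_j(c,\dots,c)$ have bounded length, say $\le m_0$, independent of $i,j$. For each subset $A\subseteq\omega$ I would build a minimal $L$-structure $M_A$ in which $f_i(c,\dots,c)=f_j(c,\dots,c)$ holds iff $i,j$ lie in the same "block" of a partition coded by $A$ — concretely, arrange that $M_A$ and $M_B$ disagree on some atomic sentence of length $\le m_0$ whenever $A\ne B$, so $d(M_A,M_B)\ge 1/m_0$. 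This yields $2^{\aleph_0}$ points pairwise at distance $\ge 1/m_0$, so $(\SL,d)$ is not separable, and in fact not compact. The one delicate point is checking that such $M_A$ genuinely exist as minimal structures — but this is exactly what Proposition~\ref{max} provides: one writes down a finitely satisfiable set of ground atomic and negated atomic sentences dictating the desired identifications and completes it, or more concretely builds the term model directly as in the proof of Proposition~\ref{max}. Finally $(3)\Rightarrow(1)$ is trivial (the Stone-clopen sets are exactly $\ModS(\phi)$, $\phi\in\QFL$, by Proposition~\ref{clopen-fin-ax}), closing the loop; and $(1)\Rightarrow(2)$ holds because open balls are clopen in the ultrametric space by the discussion preceding Proposition~\ref{clopen}.
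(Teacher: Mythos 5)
Your proposal is correct and follows essentially the same route as the paper: the same implication cycle (up to reordering), the same key observation for $(6)\Rightarrow(2)$ that local finiteness makes the set of atomic sentences of each bounded length finite so that every ball is some $\ModS(\phi)$, and the same use of compactness together with $(2)$ to recover $(1)$. The only cosmetic difference is in $(5)\Rightarrow(6)$, where you exhibit a family of $2^{\aleph_0}$ minimal structures pairwise separated by distance $\ge 1/m_0$ rather than diagonalizing against an arbitrary countable sequence; both arguments rest on the identical construction of continuum-many minimal structures distinguished by atomic sentences of uniformly bounded length.
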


\begin{proof}
We prove $(1)\Rightarrow (2)\Rightarrow (3)\Rightarrow (4)\Rightarrow (5)\Rightarrow (6)\Rightarrow (2)$ and
$(2)\wedge(4)\Rightarrow (1)$.

$(1)\Rightarrow (2)$ because every open ball in $\SL$ is closed.

$(2)\Rightarrow(3)$. Due to (2), every open set in the ultrametric space $\SL$
is open in the Stone topology on $\SL$. Together with Corollary~\ref{finer}, this gives (3).

$(3)\Rightarrow(4)$ because the Stone space $S_L$ is compact, which is a consequence of Herbrand's theorem.

$(4)\Rightarrow(5)$ because any compact metric space is separable.

$(5)\Rightarrow(6)$.
Suppose there are infinitely many $n$-ary symbols in $L$.
We show that $\SL$ is not separable.

First we show that there is a family $\{\theta_i: i<\omega\}$ of atomic $L$-sentences
of the same length $m$ such that for
any $I\subseteq\omega$ the set of sentences
$$\Theta_I=\{\theta_i: i\in I\}\cup\{\neg\theta_i: i\notin I\}$$
holds in some  minimal $L$-structure $N_I$.

Let $c$ be a constant symbol in $L$.
If $L$ contains infinitely many distinct and different from $c$ constant symbols $c_0,c_1,\dots$,
one can take the sentence $c_i=c$\ as~$\theta_i$.
If $L$ contains infinitely many distinct
$n$-ary function
symbols $f_0,f_1,\dots$, where $n\ge 1$,
one can take the sentence $f_i(c,\dots,c)=c$ as~$\theta_i$.
If $L$ contains infinitely many distinct $n$-ary
relation symbols $P_0,P_1,\dots$
one can take the sentence $P_i(c,\dots,c)$ as~$\theta_i$.
Clearly, for such choice of $\theta_i$ the set $\Theta_I$ holds in some $L$-structure,
and hence in its core $N_I$.

We prove that no countable  subset is dense in $\SL$.
To show that, we construct
for any sequence $(M_i: i<\omega)$ in $\ML$ a member of $\ML$
which is not $m$-close to $M_i$ for every $i<\omega$.
Let $$I=\{i<\omega: M_i\nvDash\theta_i\}.$$
Then for any $i$ the
structure $N_I$ is not $m$-close
to $M_i$ because $N_i\models\theta_i$ but $M_i\nvDash\theta_i$.

$(6)\Rightarrow(2).$
Let $S\in\SL$, and $m\ge 1$. We show that
$B(S,1/m)=\ModS(\phi)$ for some $\phi\in\QFL$.
Since $L$ is locally finite, the set of atomic
$L$\nobreakdash-sentences of length $\le m+1$ is finite.
Let $\phi$ be the conjunction of all sentences from $\at(M)$ of length $\le m+1$.
Then $P\in \ModS(\phi)$ means exactly that $P$ and $S$ are $(m+1)$-close,
that is, $P\in B(S,1/m)$.

$(2)\wedge(4)\Rightarrow(1).$
Let $U$ be a clopen set in the ultrametric space $\SL$. Since $U$ is closed, it is compact, by~(4).
Since $U$ is open, it is a union of open balls, and so a union of finitely many open balls $B_i$,
by compactness of $U$. By (2), each $B_i$ is $\ModS(\phi_i)$,
for some $\phi_i\in \QFL$.
Then $U=\ModS(\phi)$, where $\phi=\bigvee_i\phi_i$.
\end{proof}

\begin{corollary}
If $L$ is locally finite then all subspaces of the ultrametric space $\SL$ are separable.
\end{corollary}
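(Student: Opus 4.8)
The plan is to reduce the statement to the elementary fact that separability is a hereditary property in the category of metric spaces, the hypothesis on $L$ entering only through Theorem~\ref{compact-S}. Since $L$ is locally finite, condition~(4) of that theorem holds, so $(\SL,d)$ is a compact metric space; being compact and metric, it is separable. Thus everything reduces to showing that an arbitrary subspace of $(\SL,d)$ is separable, and for this I would use compactness only through total boundedness, together with the strong triangle inequality.

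Concretely, I would argue as follows. For each $m\ge 1$ the relation $d(S,P)\le 1/m$ is, by the strong triangle inequality, an equivalence relation on $\SL$; its classes are the closed balls of radius $1/m$, which are open in $\SL$ (recall that in $\SL$ every open ball is a closed set, and these balls coincide with open balls of slightly larger radius), so they form an open cover of $\SL$ and hence, by compactness, there are only finitely many of them. Given a subspace $Y\subseteq\SL$, choose one point of $Y$ from each of these finitely many classes that meets $Y$; this produces a finite set $F_m\subseteq Y$ such that every point of $Y$ lies within distance $1/m$ of a point of $F_m$. Then $D=\bigcup_{m\ge 1}F_m$ is a countable subset of $Y$, and for any $y\in Y$ and $\varepsilon>0$, choosing $m$ with $1/m<\varepsilon$ gives a point of $F_m\subseteq D$ at distance $\le 1/m<\varepsilon$ from $y$; hence $D$ is dense in $Y$. (Alternatively one may simply quote that a separable metric space is second countable, that second countability is hereditary, and that a second countable metric space is separable.)

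I do not anticipate any genuine obstacle: the corollary is essentially a repackaging of Theorem~\ref{compact-S}(4) with a standard metric-space fact. The one point worth a remark is that the metric structure is really needed here --- separability is not hereditary for arbitrary topological spaces --- so the argument must be carried out in $(\SL,d)$ (equivalently, by Theorem~\ref{compact-S}(3), in the Stone topology, which agrees with the ultrametric topology precisely under local finiteness), and it is the at-most-countability forced by local finiteness of $L$ that makes $\SL$ metrizable in the first place.
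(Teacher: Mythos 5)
Your argument is correct, and your parenthetical alternative (separable metric $\Rightarrow$ second countable, second countability is hereditary, second countable metric $\Rightarrow$ separable) is exactly the paper's proof, which is stated in one line. Your main argument takes a more hands-on route: you exploit the ultrametric structure to note that $d(S,P)\le 1/m$ is an equivalence relation whose classes are clopen balls, conclude by compactness (condition (4) of Theorem~\ref{compact-S}) that there are finitely many classes for each $m$, and then build a countable dense subset of an arbitrary subspace $Y$ by selecting one point of $Y$ from each class that meets $Y$. This is sound; the only tiny point to make explicit is that a cover of a compact space by pairwise disjoint nonempty open sets must itself be finite, which is immediate since no proper subfamily covers. (Alternatively, when $L$ is locally finite the finiteness of the classes follows directly from the finiteness of the set of atomic sentences of length $\le m$, with no appeal to compactness.) What your version buys over the paper's is a self-contained construction that in fact proves something slightly stronger --- every subspace of $\SL$ is totally bounded with the same $\varepsilon$-net bounds --- while the paper's version buys brevity and generality, since it applies verbatim to any separable metric space with no use of the ultrametric inequality.
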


\begin{proof}
For metric spaces separability is equivalent to existence of a countable base,
which is a hereditary property.
\end{proof}

\section{Gromov--Hausdorff spaces and compactness}

\subsection{Gromov--Hausdorff distance}\label{GH-facts}
First we recall some notions and facts of metric geometry (see \cite[Chapter 7]{BBI}).
We already used above the notion of semi-metric;
we will need a bit more general definition of semi-metric,
in which distances between points can be infinite.

A map $d:X\times X\to \R\cup\{\infty\}$ is called a \emph{semi-metric} on $X$
if $d$ is nonnegative, symmetric, satisfies the triange inequality, and $d(x,x)=0$ for every $x\in X$.
A semi-metric is called a \emph{metric} if
$d(x,y)>0$ for different $x,y\in X$.

A set (or, more generally, a class) equipped with a (semi-)metric
is said to be a (\emph{semi-})\emph{metric space}.
Usually, the set and the space are denoted with the same letter,
and the (semi-)metric of the space $X$ is denoted by $d_X$.

Like a metric, any semi-metric $d$ on $X$ defines a topology on $X$ in a usual way;
this topology is Hausdorff iff $d$ is a metric.

We will use the following easy observations.
Let $X$ and $Y$ be semi-metric spaces, and $f:X\to Y$ be surjective and distance-preserving.
Then
\begin{itemize}
\item
if $A$ is a compact subset of $X$ then $f(A)$ is a compact subset of $Y$, and
\item
if $B$ is a compact subset of $Y$ then $f^{-1}(B)$ is a compact subset of $X$.
\end{itemize}

For a semi-metric $d$ on $X$, the relation $d(x,y)=0$ is an equivalence relation on $X$.
Denote by $[x]$ the equivalence class of $x\in X$, and by $X/d$ the set of all equivalence classes.
Clearly, $([x],[y])\mapsto d(x,y)$ is a well-defined metric on $X/d$; thus
$x\mapsto [x]$ is a surjective and distance-preserving map
from the semi-metric space $X$ to the metric space $X/d$.

The \emph{Hausdorff distance} $d_H(X,Y)$
between subspaces $X$ and $Y$ of a metric space $Z$
is defined to be the infimum of all $r>0$ such that
for any $x\in X$ there is $y\in Y$ with $d(x,y)<r$, and
for any $y\in Y$ there is $x\in X$ with $d(x,y)<r$.
If there is no such $r$ then $d_H(X,Y):=\infty$.
Clearly, $d_H(X,Y)<\infty$ for bounded $X$ and $Y$.

The map $d_H$ is a semi-metric on the set of all subspaces of $Z$.
In general, it is not a metric: for example, $d_H(X,\bar{X})=0$,
for any subspace $X$ of $Z$ and its closure~$\bar{X}$ in $Z$.
However, $d_H$ is a metric on the set of closed subsets of $Z$.

Any two metric spaces $X$ and $Y$ are isometrically embeddable into a third metric space $Z$;
for each such embeddings the Hausdorff distance between the isometric images of $X$ and $Y$ is defined.
The infimum of Hausdorff distances between $X$ and $Y$ for all such embeddings is called
the \emph{Gromov--Hausdorff distance} between $X$ and $Y$ (cf. \cite{BBI, Gro2});
it is denoted by $d_{GH}(X,Y)$. An equivalent, often more convenient,
definition (see \cite{BBI}, Theorem 7.3.25):
\begin{equation}
d_{GH}(X,Y)=\frac{1}{2}\inf
\sup\{|d_X(x_i,x_j)-d_Y(y_i,y_j|:\ i,j\in I\}, \tag{$\star$}
\end{equation}
where the infimum is taken over all maps $i\mapsto x_i$,\ $i\mapsto y_i$
from sets  $I$ onto $X$, $Y$.

The map $d_{GH}$ is a semi-metric on the class of all metric spaces;
we denote the corresponding semi-metric space by $\GH$.

The semi-metric $d_{GH}$ can be extended to a semi-metric
on the class of all semi-metric spaces: for semi-metric spaces $X$ and $Y$ put
$$d_{GH}(X,Y)=d_{GH}(X/d_X, Y/d_Y).$$
It is easy to show that ($\star$) holds for semi-metric spaces $X$ and $Y$ as well.

\subsection{Uniform boundness and compactness}
It is known that any uniformly totally bounded class of compact metric spaces
is precompact in the Gromov-Hausdorff topology~\cite[Theorem 7.4.15]{BBI}.
Here a class of metric spaces $\X$
is called \emph{uniformly totally bounded}~if for every $\varepsilon\ge 0$ there is
a positive integer $n_\varepsilon$ such that
\begin{enumerate}
\item
the diameter of every space in $\X$ is $\le n_0$;
\item
for any $\varepsilon>0$ any space in $\X$
can be covered by $\le n_\varepsilon$ closed balls of radius~$\varepsilon$.
\end{enumerate}

Our goal is to  prove compactness of certain subclasses of $\GH$
using compactness of the Stone space $\SL$ for a certain~$L$.

We call semi-metric spaces satisfying (1) and (2) $\nu$-bounded, where
$$\nu:[0,\infty)\to {\mathbb Z}^{>0},\quad \nu(\varepsilon)=n_\varepsilon.$$
We denote the class of $\nu$-bounded metric spaces by $\X_\nu$.
So a class $\X$ of metric spaces is uniformly totally bounded if $\X\subseteq\X_\nu$, for some $\nu$.

\begin{theorem}\label{nu-bound}
For any $\nu$, the class $\X_\nu$ is compact in $\GH$.
\end{theorem}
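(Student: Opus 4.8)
The plan is to realize $\X_\nu$ as the continuous image, under a distance-preserving surjection, of a compact subset of the Stone space $\SL$ for a suitable signature $L$, and then invoke the elementary fact (stated in \S\ref{GH-facts}) that surjective distance-preserving maps send compact sets to compact sets. The natural signature to take is a relational one: for each nonnegative rational $q$ introduce a binary relation symbol $R_q$ (and a single constant symbol, to make minimal structures nonempty; one can simply interpret it arbitrarily, or better, work with the signature augmented by countably many constants whose interpretations will exhaust a countable dense subset). The intended reading of $R_q(x,y)$ in a structure coming from a semi-metric space $X$ is $d_X(x,y)\le q$ (or $<q$; the precise convention must be fixed and checked against $(\star)$). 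I would call these \emph{semi-metric structures}: an $L$-structure is a semi-metric structure if it satisfies the first-order axioms forcing the family $\{R_q\}$ to encode a genuine semi-metric, namely $R_q(x,x)$ for all $q$, symmetry $R_q(x,y)\to R_q(y,x)$, monotonicity $R_q(x,y)\to R_{q'}(x,y)$ for $q\le q'$, the density/continuity condition $\bigwedge_{q>p}R_q(x,y)\leftrightarrow R_p(x,y)$ (as a scheme of universal sentences), and the triangle-inequality scheme $R_q(x,y)\wedge R_{q'}(y,z)\to R_{q+q'}(x,z)$. All of these are universal, so by Proposition~\ref{univ-theory} the set of isomorphism types of minimal semi-metric structures is \emph{closed} in $\SL$, hence compact by Proposition~\ref{st-comp}(ii).

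Next I would impose the $\nu$-boundedness constraints by further universal/existential axioms and check which ones keep the relevant set closed. Condition (1), diameter $\le n_0$, is the single universal sentence $\forall x\forall y\,R_{n_0}(x,y)$, so it cuts out a closed subset. Condition (2) — coverability by $n_\varepsilon$ closed $\varepsilon$-balls — is the scheme, for each $\varepsilon$ and with $N=\nu(\varepsilon)$, of sentences asserting: for all $x_0,\dots,x_N$ there exist $i\ne j$ with $R_{2\varepsilon}(x_i,x_j)$; equivalently every set of $N+1$ points has two within $2\varepsilon$, which (via a standard maximal-packing argument) is equivalent to coverability by $N$ balls of radius $\varepsilon$ up to harmless constant factors. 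This is again a universal sentence, so intersecting over all $\varepsilon$ keeps the set closed. Let $\K_\nu\subseteq\SL$ denote the resulting closed — hence compact — set of isomorphism types of minimal $\nu$-bounded semi-metric structures. The main point to verify carefully here is the dictionary: every minimal semi-metric structure, being generated by its (at most countably many) constants, has a countable universe carrying a semi-metric, whose completion is a compact metric space; and conversely every $\nu$-bounded metric space, being totally bounded, has a countable dense subset which, with the $R_q$ read off from the metric, is a minimal $\nu$-bounded semi-metric structure.

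Finally I would define the surjection $\Phi:\K_\nu\to\X_\nu/\!\!\sim$ (to the metric quotient, i.e. to isometry classes in $\GH$) by sending the isomorphism type of a minimal semi-metric structure $M$ to the isometry class of the completion of the metric space $M/d_M$. Surjectivity follows from the converse direction of the dictionary. The crux — and the step I expect to be the main obstacle — is showing $\Phi$ is \emph{distance-preserving} for the ultrametric $d$ on $\SL$ versus $d_{GH}$; but this is false as literally stated, so what one actually wants is that $\Phi$ is uniformly continuous (indeed that the $d$-topology refines the pullback of the $d_{GH}$-topology), which combined with compactness of $\K_\nu$ and surjectivity yields compactness of the image in $\GH$. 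Concretely: if two minimal semi-metric structures agree on all atomic sentences of length $\le m$, then (since an atomic sentence of bounded length mentions only boundedly many constants and a single $R_q$ with $q$ of bounded complexity) their associated metric spaces have $\varepsilon$-dense finite subsets that match up to error controlled by $1/m$, so by $(\star)$ their Gromov--Hausdorff distance is small. Making the quantitative bound precise — tracking how "length $\le m$" limits which constants and which rationals $q$ can appear, and converting that into an $\varepsilon$-net comparison feeding into $(\star)$ — is the technical heart of the argument; once it is in hand, compactness of $\X_\nu$ in $\GH$ drops out from compactness of $\K_\nu$ and the observations on distance-preserving surjections recalled in \S\ref{GH-facts}. (Corollary~\ref{compact} presumably then records that any uniformly totally bounded class, being a subclass of some $\X_\nu$, is precompact, and is closed exactly when the corresponding subset of $\K_\nu$ is.)
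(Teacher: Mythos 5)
Your overall strategy --- encode semi-metric spaces as relational structures, cut out a closed (hence compact) set of minimal structures by universal axioms, and push compactness forward through a continuous surjection onto $\X_\nu$ --- is the paper's strategy. But there are two genuine gaps, and they occur exactly where the paper's proof does something your proposal does not. The first concerns your encoding of condition (2). The packing axiom ``among any $N+1$ points some two satisfy $R_{2\varepsilon}$'' captures coverability only, as you say, ``up to harmless constant factors''; here those factors are not harmless. The class it cuts out contains $\X_\nu$ but is in general strictly larger (already the two-point space of diameter $2\varepsilon$ satisfies the packing condition for $N=1$ but is not covered by one closed $\varepsilon$-ball), so your $\Phi$ maps onto a uniformly totally bounded class properly containing $\X_\nu$. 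Compactness of that larger class gives only \emph{pre}compactness of $\X_\nu$ --- the already-known result the theorem is meant to refine --- and to conclude you would still need $\X_\nu$ closed inside your compact class, which is essentially the whole content of the refinement. The paper's missing idea is to adjoin, for each $\varepsilon>0$, a set $C_\varepsilon$ of exactly $n_\varepsilon$ \emph{constant symbols naming the centers} of the covering balls; the exact covering condition then becomes the universal sentence $\forall u\,\bigvee_{c\in C_\varepsilon}\Rep(u,c)$ (a finite disjunction), the class $\ModM(\Gamma_\nu)$ stays closed, and its image is exactly the class of $\nu$-bounded spaces.

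The second gap is a mismatch of topologies in your continuity step. Your signature has infinitely many binary relation symbols, so $L$ is not locally finite and, by Theorem~\ref{compact-S}, the ultrametric space $\SL$ is \emph{not} compact; only the Stone topology is. But the continuity you sketch --- ``agreement on all atomic sentences of length $\le m$ forces small $d_{GH}$'' --- is continuity for the ultrametric topology: that agreement condition involves infinitely many atomic sentences (all $R_q(c,c')$ of bounded length) and is not a basic Stone-open set. Continuity for the finer topology cannot be combined with compactness in the coarser one. What must be shown, and what the paper's final lemma shows, is that a \emph{single} quantifier-free sentence --- agreement on the finite set of atomic sentences $R_{i\varepsilon}(a,b)$ with $a,b\in C_\varepsilon$ and $1\le i\le m$ --- already controls $d_{GH}$ to within $\alpha$ via $(\star)$; and it is again the named $\varepsilon$-net centers that make a finite set of sentences suffice. (A smaller point: your axiom $\bigwedge_{q>p}R_q(x,y)\to R_p(x,y)$ is an infinitary implication, not a universal first-order scheme; the paper sidesteps the need for it by only requiring $\Rep$ to be sandwiched between $\{d<\varepsilon\}$ and $\{d\le\varepsilon\}$, which is what makes the class of semi-metric structures universally axiomatizable.)
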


We postpone the proof until Subsection~\ref{compactness-nu-bound},
because for that we need a certain correspondence between
semi-metric spaces and structures, which requires some preparatory work.

Theorem~\ref{nu-bound} has a corollary which is a refinement
of the result on precompactness of
any uniformly totally bounded class of compact metric spaces in the Gromov-Hausdorff topology.

\begin{corollary}\label{compact}
For any $\nu$, the class $\Cnu$ of $\nu$-bounded compact metric spaces is compact in $\GH$.
\end{corollary}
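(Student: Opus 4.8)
The plan is to deduce compactness of $\Cnu$ from compactness of the larger class $\X_\nu$ established in Theorem~\ref{nu-bound}, exploiting the fact that $\GH$ carries only a \emph{semi}-metric: a metric space and its completion lie at Gromov--Hausdorff distance $0$ and are therefore topologically indistinguishable in $\GH$. So the role of compact spaces inside $\X_\nu$ is that of a ``dense, full'' subclass: every $\nu$-bounded metric space is at distance $0$ from a $\nu$-bounded compact one.

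First I would record that condition~(2) in the definition of $\nu$-boundedness says precisely that every $X\in\X_\nu$ is totally bounded, so its completion $\bar X$ is a compact metric space. Next I would check that $\bar X$ is again $\nu$-bounded: its diameter equals that of $X$, and, since $X$ is dense in $\bar X$, the centres of any $\le\nu(\varepsilon)$ closed balls of radius $\varepsilon$ that cover $X$ still give closed balls of radius $\varepsilon$ covering $\bar X$ --- a point of $\bar X$ is a limit of points of $X$, and by the pigeonhole principle a subsequence of them lies in one fixed such closed ball, whose centre is then within $\varepsilon$ of the limit. Thus $\bar X\in\Cnu$. Finally, $d_{GH}(X,\bar X)=0$, since $X$ sits isometrically as a dense subspace of $\bar X$, making the Hausdorff distance between $X$ and $\bar X$ inside $\bar X$ equal to $0$.

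The main step is then a one-line covering argument. Let $\mathcal U$ be any family of open subsets of $\GH$ covering $\Cnu$. For an arbitrary $X\in\X_\nu$ we have $\bar X\in\Cnu$ by the previous paragraph, so $\bar X\in U$ for some $U\in\mathcal U$; as $U$ is open and $d_{GH}(X,\bar X)=0$, it follows that $X\in U$. Hence $\mathcal U$ already covers $\X_\nu$, and by Theorem~\ref{nu-bound} some finite subfamily of $\mathcal U$ covers $\X_\nu$, a fortiori $\Cnu$. Therefore $\Cnu$ is compact in $\GH$. (Note this route does not even need the observations about surjective distance-preserving maps from \S\ref{GH-facts}, only Theorem~\ref{nu-bound}.)

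I do not anticipate a real obstacle here once Theorem~\ref{nu-bound} is available. The only points demanding care are, in the second paragraph, the verification that uniform total boundedness is inherited by the completion \emph{with the same function $\nu$} and \emph{in terms of closed balls} (so that $\bar X$ genuinely lands in $\Cnu$, not merely in some $\X_{\nu'}$), and, in the third paragraph, the explicit use of the non-Hausdorff feature of $\GH$: an open set containing $\bar X$ automatically contains $X$, which is exactly what turns an open cover of the subclass $\Cnu$ into an open cover of the whole class $\X_\nu$.
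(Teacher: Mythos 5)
Your proof is correct. It shares with the paper the key preparatory facts --- that the completion $\bar X$ of a $\nu$-bounded space is again $\nu$-bounded (with the same $\nu$, via the same density argument for the covering balls) and compact, and that $d_{GH}(X,\bar X)=0$ --- but the final step is organized differently. The paper invokes its earlier observation that a surjective distance-preserving map sends compact sets to compact sets: it verifies that $X\mapsto\bar X$ maps $\X_\nu$ \emph{onto} $\Cnu$ (for $Y\in\Cnu$ one takes the countable set of centers of the covering balls as a dense $X\in\X_\nu$ with $\bar X=Y$) and then pushes forward the compactness of $\X_\nu$ from Theorem~\ref{nu-bound}. You instead run the open-cover argument directly: since $\Cnu\subseteq\X_\nu$ and every $X\in\X_\nu$ lies at Gromov--Hausdorff distance $0$ from $\bar X\in\Cnu$, any open cover of $\Cnu$ is automatically an open cover of all of $\X_\nu$, and a finite subcover of $\X_\nu$ restricts to one of $\Cnu$. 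This buys you a small economy --- you never need the surjectivity of the completion map onto $\Cnu$, only the inclusion $\Cnu\subseteq\X_\nu$ and the fact that completions land in $\Cnu$ --- at the cost of making explicit use of the non-Hausdorff nature of the semi-metric (an open set containing a point contains everything at distance $0$ from it), which is exactly the point you flag. Both arguments are sound.
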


\begin{proof}[Proof of Corollary~\ref{compact}]
It suffices to show that the map $X\mapsto\hat{X}$, where $\hat{X}$ is a completion of $X$,
is a surjective distance-preserving map from $\X_\nu$ to $\Cnu$.

First we note that if $X\in\X_\nu$ then
$\hat{X}\in\Cnu$.
If $X\in\X_\nu$ is a dense subspace of a metric space $Y$,
then $Y\in\X_\nu$.
(Indeed, first, $X$ and $Y$ have the same diameter,
and, second, if for some $x_1,\dots,x_n\in X$ and $\varepsilon>0$ the closed balls
$\bar{B}_X(x_i,\varepsilon)$ cover $X$
then the closed balls
$\bar{B}_Y(x_i,\varepsilon)$ cover $Y$
because otherwise the complement of $\bigcup_i\bar{B}_Y(x_i,\varepsilon)$ in $Y$
is open and nonempty but does not meet $X$, contrary to density of $X$ in $Y$.)
So $\hat{X}\in\X_\nu$.
Since a metric space is compact iff it is complete and totally bounded,
$\hat{X}$ is compact. So $\hat{X}\in\Cnu$.

Now we show that $X\mapsto\hat{X}$ maps $\X_\nu$ onto $\Cnu$.
For $Y\in\Cnu$ and $i=1,2,\dots$ choose
$n_i$ closed balls ${\bar B}_Y(y_{ij},1/i)$ which cover~$Y$.
Let $X$ be the subspace of all $y_{ij}$. Then $X\in\X_\nu$ and $Y=\hat{X}$.

Clearly, $d_{GH}(X,\hat{X})=0$; therefore the map $X\mapsto\hat{X}$ preserves $d_{GH}$.
\end{proof}

\subsection{Semi-metric structures}\label{semi-metric-structure}
Now we introduce some relational signature $L_0$,
and associate with any semi-metric space $X$ a set of certain
$L_0$-structures with universe $X$; we call them \emph{$X$-structures}.
An $L_0$-structure, which is an $X$\nobreakdash-structure for some semi-metric space $X$,
will be called a \emph{semi-metric structure}. We call $L_0$ the
\emph{signature of semi-metric structures}.

The signature $L_0$ consists of binary relational symbols $\Rep$, where $\varepsilon\in\R^{>0}$.
An $L_0$-structure $M$ with a universe $X$ is called an $X$\nobreakdash-\emph{structure} if
for any $\varepsilon>0$ and any $x,y\in X$
$$[d_X(x,y)<\varepsilon] \Rightarrow [M\models\Rep(x,y)] \Rightarrow  [d_X(x,y)\le \varepsilon].$$

An example of $X$-structure is the $L_0$-structure $M_X$ on $X$ in which for any $\varepsilon$
$$\Rep^{M_X}=\{(x,y)\in X\times X: d_X(x,y)\le \varepsilon\}.$$

We will use the structure $M_X$ in the proof of Theorem~\ref{nu-bound}.
This example is not unique:
replacing $\le$ with $<$ in the definition of $M_X$, we obtain another example of $X$-structure.

Now we show that any $X$-structure completely determines the space $X$.

For an $L_0$-structure $M$ and $x,y\in M$ denote
\begin{equation*}
d_M(x,y)=
\begin{cases}
\inf\{\varepsilon: M\models\Rep(x,y)\},
&\text{if $M\models\Rep(x,y)$ for some $\varepsilon$;}\\
\infty, &\text{otherwise}.
\end{cases}
\end{equation*}

\begin{proposition}\label{unique}
If $M$ is an $X$-structure then $d_M=d_X$.
In particular, if $M$ is an $X$-structure and $Y$-structure then
the semi-metric spaces $X$ and $Y$ coincide.
\end{proposition}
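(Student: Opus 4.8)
The plan is to prove the main assertion $d_M=d_X$ pointwise; the final assertion then follows at once, since an $X$-structure has universe $X$ by definition, so if $M$ is both an $X$-structure and a $Y$-structure then $X=Y$ as sets and $d_X=d_M=d_Y$, i.e. $X$ and $Y$ are literally the same semi-metric space.

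Fix $x,y\in X$ and set $a=d_X(x,y)$. First I would dispose of the case $a<\infty$. To get $d_M(x,y)\le a$, note that for every real $\varepsilon>a$ one has $d_X(x,y)<\varepsilon$, so the first implication in the definition of an $X$-structure yields $M\models\Rep(x,y)$; hence the set $\{\varepsilon:M\models\Rep(x,y)\}$ contains the interval $(a,\infty)$ and in particular is nonempty, so $d_M(x,y)=\inf\{\varepsilon:M\models\Rep(x,y)\}\le a$. For the reverse inequality $d_M(x,y)\ge a$: whenever $\varepsilon$ satisfies $M\models\Rep(x,y)$, the second implication gives $d_X(x,y)\le\varepsilon$, that is $a\le\varepsilon$; thus $a$ is a lower bound for $\{\varepsilon:M\models\Rep(x,y)\}$ and so $a\le d_M(x,y)$. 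Combining, $d_M(x,y)=a$.

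Next I would handle $a=\infty$. In that case there can be no $\varepsilon$ with $M\models\Rep(x,y)$, for such an $\varepsilon$ would force $d_X(x,y)\le\varepsilon<\infty$ by the second implication; hence $d_M(x,y)=\infty=a$ by the definition of $d_M$. Since these two cases are exhaustive, $d_M(x,y)=d_X(x,y)$ for all $x,y\in X$, i.e. $d_M=d_X$.

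I do not expect a genuine obstacle here: the whole statement is just an unwinding of the defining squeeze $[d_X(x,y)<\varepsilon]\Rightarrow[M\models\Rep(x,y)]\Rightarrow[d_X(x,y)\le\varepsilon]$. The only points deserving a moment's attention are the interplay between the infimum and the strict-versus-nonstrict inequalities (which is why one gets $d_M(x,y)\le a$ directly from membership of $(a,\infty)$, with no need for a minimum to be attained) and the separate bookkeeping for infinite distances, both of which the two-case split above settles cleanly.
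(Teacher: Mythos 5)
Your proof is correct and follows essentially the same route as the paper's: both arguments simply unwind the defining squeeze $[d_X(x,y)<\varepsilon]\Rightarrow[M\models\Rep(x,y)]\Rightarrow[d_X(x,y)\le\varepsilon]$ to pin down the infimum, the only cosmetic difference being that the paper proves $d_M(x,y)\le d_X(x,y)$ by contradiction (choosing $\varepsilon$ strictly between the two values) where you argue both inequalities directly and split off the $d_X(x,y)=\infty$ case explicitly.
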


\begin{proof} Let $x,y\in X$.
For any $\varepsilon>0$ with $M\models\Rep(x,y)$ we have $d_X(x,y)\le\varepsilon$; so
$$d_X(x,y)\le\inf\{\varepsilon: M\models\Rep(x,y)\}=d_M(x,y).$$
Suppose $d_X(x,y)< d_M(x,y)$.
Choose $\varepsilon$ with $$d_X(x,y)<\varepsilon<d_M(x,y).$$
Since $d_X(x,y)<\varepsilon$, we have
$M\models\Rep(x,y)$, and hence $d_M(x,y)\le \varepsilon$, contrary to the choice of $\varepsilon$.
\end{proof}

\begin{proposition}\label{univ-axioms}
The class of semi-metric structures is universally axiomatizable.
\end{proposition}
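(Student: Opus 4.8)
The plan is to write down an explicit set of universal $L_0$-sentences and verify that its models are exactly the semi-metric structures. A semi-metric structure is an $L_0$-structure $M$ that arises as an $X$-structure for some semi-metric space $X$; by Proposition~\ref{unique} the only candidate for $X$ is the space $(M,d_M)$, so the task reduces to characterizing, by universal axioms, those $M$ for which $d_M$ is a semi-metric \emph{and} $M$ is an $X$-structure for $X=(M,d_M)$. Unwinding the definition of $X$-structure, what must hold is: for all $\varepsilon>0$ and all $x,y$, $[d_M(x,y)<\varepsilon]\Rightarrow[M\models\Rep(x,y)]\Rightarrow[d_M(x,y)\le\varepsilon]$. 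The right-hand implication is automatic from the definition of $d_M$. So the content is (a) $d_M$ satisfies the semi-metric axioms (nonnegativity and $d_M(x,x)=0$ are about to be handled; symmetry; triangle inequality), and (b) the ``monotone/completeness'' condition that if $d_M(x,y)<\varepsilon$ then $M\models\Rep(x,y)$.

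First I would translate each of these into universal sentences over $L_0$. Symmetry of $d_M$ is captured by the axioms $\forall x\,\forall y\,(\Rep(x,y)\to\Rep(x,y))$... more precisely $\forall x\,\forall y\,(R_\varepsilon(x,y)\leftrightarrow R_\varepsilon(y,x))$ for every $\varepsilon>0$. The condition $d_M(x,x)=0$ is captured by $\forall x\,\Rep(x,x)$ for every $\varepsilon>0$. Monotonicity in $\varepsilon$, namely $\varepsilon<\delta\Rightarrow(\Rep(x,y)\to R_\delta(x,y))$, should be imposed as $\forall x\,\forall y\,(\Rep(x,y)\to R_\delta(x,y))$ for all pairs $\varepsilon<\delta$; this both makes $\{\varepsilon:M\models\Rep(x,y)\}$ an up-set and is the key to the ``$d_M(x,y)<\varepsilon\Rightarrow M\models\Rep(x,y)$'' requirement, since $d_M(x,y)<\varepsilon$ means some $\delta<\varepsilon$ has $M\models R_\delta(x,y)$, and monotonicity then gives $M\models\Rep(x,y)$. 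The triangle inequality $d_M(x,z)\le d_M(x,y)+d_M(y,z)$ is the delicate one: I would impose, for every triple $\varepsilon,\delta>0$, the universal sentence $\forall x\,\forall y\,\forall z\,\bigl((\Rep(x,y)\wedge R_\delta(y,z))\to R_{\varepsilon+\delta}(x,z)\bigr)$, and then check by a short infimum argument that this family forces $d_M(x,z)\le d_M(x,y)+d_M(y,z)$.

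Then I would argue both inclusions. If $M$ is a semi-metric structure, i.e.\ an $X$-structure, then $d_M=d_X$ by Proposition~\ref{unique}, and each displayed axiom holds because it is a consequence of the corresponding property of the semi-metric $d_X$ together with the sandwich inequality $[d_X<\varepsilon]\Rightarrow[\Rep]\Rightarrow[d_X\le\varepsilon]$; this direction is routine. Conversely, suppose $M$ satisfies all the axioms. I claim $d_M$ is a semi-metric on $X:=M$ and that $M$ is an $X$-structure. Nonnegativity is built into the definition of $d_M$; $d_M(x,x)=0$ follows from the reflexivity axioms; symmetry from the symmetry axioms; the triangle inequality from the triple axioms via the infimum computation mentioned above. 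Finally, $M\models\Rep(x,y)\Rightarrow d_M(x,y)\le\varepsilon$ is immediate, and $d_M(x,y)<\varepsilon\Rightarrow M\models\Rep(x,y)$ follows from monotonicity as explained. Hence $M$ is the $X$-structure $X=(M,d_M)$, so it is a semi-metric structure.

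The main obstacle I expect is the triangle-inequality step: one must be careful that the chosen family of triple axioms (indexed by pairs of positive reals with the sum appearing on the right) genuinely yields $d_M(x,z)\le d_M(x,y)+d_M(y,z)$ for all $x,y,z$, including when one of the summands is realized only as an infimum not attained, and when $d_M$ takes the value $\infty$. The infimum argument is: given $a>d_M(x,y)$ and $b>d_M(y,z)$ with $a,b$ rational (or just real), pick $\varepsilon<a$, $\delta<b$ with $M\models\Rep(x,y)$, $M\models R_\delta(y,z)$ (possible by definition of $d_M$ and monotonicity), apply the axiom to get $M\models R_{\varepsilon+\delta}(x,z)$, hence $d_M(x,z)\le\varepsilon+\delta<a+b$; letting $a\downarrow d_M(x,y)$ and $b\downarrow d_M(y,z)$ gives the inequality, with the $\infty$ case trivial. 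Everything else is a mechanical check, and the universal (indeed $\forall$-) form of every axiom is manifest.
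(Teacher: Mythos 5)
Your overall strategy is the same as the paper's: exhibit a set of universal $L_0$-sentences expressing reflexivity, symmetry, monotonicity in $\varepsilon$, and the triangle inequality; verify that every $X$-structure satisfies them; and conversely show that every model $M$ is an $X$-structure for $X=(M,d_M)$. The completeness half of your argument (the infimum computation for the triangle inequality, the handling of $\infty$, and the derivation of $d_M(x,y)<\varepsilon\Rightarrow M\models\Rep(x,y)$ from monotonicity) is sound. But two of your axioms are not satisfied by all $X$-structures, so the soundness half fails and your axiom set, as written, does not axiomatize the intended class. The definition of an $X$-structure only sandwiches $\Rep^M$ between the open-ball and the closed-ball relation: when $d_X(x,y)=\varepsilon$ exactly, whether $M\models\Rep(x,y)$ holds is unconstrained, and independently so for each ordered pair and each $\varepsilon$. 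Your exact symmetry axiom $\forall x\,\forall y\,(\Rep(x,y)\leftrightarrow\Rep(y,x))$ therefore fails in a legitimate $X$-structure: take $X=\{x,y\}$ with $d_X(x,y)=1$ and put $(x,y)\in R_1^M$ but $(y,x)\notin R_1^M$; both choices are permitted at distance exactly $1$. Likewise your triangle axiom with the exact index $\varepsilon+\delta$ fails for three collinear points with $d(x,y)=d(y,z)=1$ and $d(x,z)=2$, putting $(x,y),(y,z)\in R_1^M$ but $(x,z)\notin R_2^M$.

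The paper sidesteps this boundary ambiguity by building strict inequalities into the indices: symmetry is stated as $\forall uv\,(R_\delta(u,v)\to\Rep(v,u))$ for $\delta<\varepsilon$, and the triangle axiom as $\forall uvw\,((\Rep(u,v)\wedge R_\delta(v,w))\to R_\eta(u,w))$ for $\varepsilon+\delta<\eta$. These relaxed forms \emph{are} consequences of the sandwich condition, since $M\models R_\delta(u,v)$ forces $d_X(u,v)\le\delta<\varepsilon$, hence $d_X(v,u)<\varepsilon$, hence $M\models\Rep(v,u)$, and similarly for the triangle. With that correction your completeness argument goes through essentially unchanged --- the infimum computation only ever needs the relaxed versions --- so the gap is localized but real: as stated, your claim that every semi-metric structure models your axioms is false.
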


\begin{proof}
Let $\Gamma$ be the set of universal $L_0$-sentences
\medskip

\begin{enumerate}
\item[(a)]
 $\forall uv\,(R_\delta(u,v)\to \Rep(v,u))$,\quad for $\delta<\varepsilon$;
\medskip

\item[(b)]
 $\forall uvw\,((\Rep(u,v)\wedge R_\delta(v,w))\to R_\eta(u,w))$,
for $\varepsilon+\delta<\eta$;
\medskip

\item[(c)]
$\forall u\,\Rep(u,u)$,
\medskip

\item[(d)]
$\forall uv\,(R_\delta(u,v)\to \Rep(u,v))$,\quad for $\delta<\varepsilon$;
\end{enumerate}
\smallskip

\noindent
where $\delta,\varepsilon ,\eta$ run over $\mathbb{R}^{>0}$.
\medskip

It is easy to check that any $X$-structure is a model of $\Gamma$.
We show that any model $M$ of $\Gamma$ is an $X$-structure for some semi-metric space $X$.

Let $M$ be a model of $\Gamma$. We show that
$d_M$ is a semi-metric on the universe of~$M$.
Obviously, $d_M$ is non-negative.
The axiom (a) implies that $d_M$ is symmetric. Indeed, suppose, say, $d_M(x,y)<d_M(y,x)$. Choose $\varepsilon$
with $$d_M(x,y)<\varepsilon<d_M(y,x).$$
By definition of $d_M$, there is $\delta<\varepsilon$ with $M\models R_\delta(x,y)$.
By (a), $M\models \Rep(y,x)$. Hence $d_M(y,x)\le\varepsilon$. Contradiction.

The axiom (b) implies  that $d_M$ satisfies the triangle inequality.
Towards a contradiction, suppose $$d(x,z)>d_M(x,y)+d_M(y,z).$$
Choose reals $\alpha$ and $\beta$
such that $$d_M(x,y)<\alpha,\quad d_M(y,z)<\beta,\quad \alpha+\beta<d_M(x,z).$$
By definition of $d_M$, there are $\varepsilon$ and $\delta$ such that $M\models\Rep(x,y)$ and
$M\models R_\delta(y,z)$.
By (b), $M\models R_{\alpha+\beta}(x,y)$.
Then $d_M(x,z)\le \alpha+\beta$. Contradiction.

By~(c), $d_M(x,x)=0$ for any $x\in M$.
Let $X$ be the semi-metric space which is the
universe of $M$ equipped with $d_M$. We show that $M$ is an $X$\nobreakdash-structure.
By definition of $d_M$, if $M\models\Rep(x,y)$ then $d_M(x,y)\le\varepsilon$;
if $d_M(x,y)<\varepsilon$ then $M\models R_\delta(x,y)$ for some $\delta<\varepsilon$, and therefore
$M\models\Rep(x,y)$, by (d).
\end{proof}

\subsection{Compactness of $\X_\nu$}\label{compactness-nu-bound}

In this subsection we give a proof of Theorem~\ref{nu-bound}.

Let $\Ynu$ be the class of $\nu$-bounded semi-metric spaces of cardinality at most $2^{\aleph_0}$.
Then $X\mapsto X/d_X$ is a distance-preserving map from $\Ynu$ to $\Xnu$.
Since any $\nu$-bounded metric space is of cardinality at most $2^{\aleph_0}$, the map is surjective.
Therefore $\Xnu$ is compact iff $\Ynu$ is compact.
We will prove compactness of $\Ynu$.

For that we define an extention $L$ of the signature $L_0$
of semi-metric structures by some constants, a universally axiomatizable subclass of $\ML$,
and a continuous surjective map  from that subclass onto $\Ynu$.
Since the subclass is compact in the Stone topology on $\ML$,
due to results of Section~\ref{stone}, this implies compactness of $\Ynu$.

Let $L=L_0(C)$, where  $C$ is the union of a family of pairwise disjoint sets of constant symbols
$\{C_\varepsilon : \varepsilon\ge 0\}$, with
$|C_0|=2^{\aleph_0}$, and $|C_\varepsilon| = n_\varepsilon$ for $\varepsilon>0$.

Let $\Gamma_\nu$ be the union of $\Gamma$ and the set of universal $L$-sentences
\medskip

\begin{enumerate}

\item[(e)]
$\forall uv\,R_{n_0}(u,v)$;
\medskip

\item[(f)]
$\forall u\,\bigvee\{\Rep(u,c): c\in C_\varepsilon\},\quad \varepsilon>0.$
\end{enumerate}
\medskip

Denote by $\Mnu$ the class $\ModM(\Gamma_\nu)$.
By results of Section~\ref{stone}, the class $\Mnu$ is compact in Stone topology on $\ML$.

\begin{lemma}\label{correspondence}
$(1)$ For any $X\in\Ynu$ there exists $M\in\Mnu$ such that
the $L_0$-reduct of $M$ is an $X$-structure;

$(2)$ for any $M\in\Mnu$ there is
a unique $X\in\Ynu$ such that the $L_0$-reduct of $M$ is an $X$-structure.
\end{lemma}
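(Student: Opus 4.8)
The plan is to prove the two parts of the lemma essentially by unwinding the definitions, using Proposition~\ref{univ-axioms} and the structure $M_X$ introduced in Subsection~\ref{semi-metric-structure}. For part $(1)$, given $X\in\Ynu$, I would take the $X$-structure $M_X$ (the one with $\Rep^{M_X}=\{(x,y):d_X(x,y)\le\varepsilon\}$) and expand it to an $L$-structure. The constants must be interpreted so that (e) and (f) hold and so that the resulting $L$-structure is \emph{minimal}. Since $|X|\le 2^{\aleph_0}=|C_0|$, I can interpret the constants in $C_0$ by a surjection onto $X$, which guarantees minimality (every element of $X$ is named). For each $\varepsilon>0$, the $\nu$-boundedness of $X$ gives $n_\varepsilon=|C_\varepsilon|$ closed balls of radius $\varepsilon$ covering $X$; interpreting the constants in $C_\varepsilon$ by the centres of these balls makes axiom~(f) hold, since for every $u\in X$ there is a centre $c$ with $d_X(u,c)\le\varepsilon$, i.e.\ $M_X\models\Rep(u,c)$. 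Axiom~(e) holds because $d_X(x,y)\le n_0$ for all $x,y$ (the diameter bound, condition (1) of $\nu$-boundedness). The axioms $\Gamma$ hold because $M_X$ is an $X$-structure (checked in the proof of Proposition~\ref{univ-axioms}). Hence the expansion lies in $\Mnu$ and its $L_0$-reduct is the $X$-structure $M_X$.

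For part $(2)$, let $M\in\Mnu=\ModM(\Gamma_\nu)$. Since $\Gamma\subseteq\Gamma_\nu$, the $L_0$-reduct $M_0$ is a model of $\Gamma$, so by Proposition~\ref{univ-axioms} it is an $X$-structure for the semi-metric space $X$ whose universe is that of $M$ and whose semi-metric is $d_{M}$ (equivalently $d_{M_0}$). Uniqueness of $X$ is immediate from Proposition~\ref{unique}: if $M_0$ is both an $X$-structure and a $Y$-structure then $X=Y$. It remains to verify $X\in\Ynu$, i.e.\ that $X$ is $\nu$-bounded and has cardinality at most $2^{\aleph_0}$. The cardinality bound holds because $M$ is minimal, so every element of $M$ is the value of a ground $L$-term; since $L$ has $|C|=2^{\aleph_0}$ constants and only countably many relation symbols, there are at most $2^{\aleph_0}$ ground terms (indeed here all ground terms are just constants, as $L$ has no function symbols), so $|X|\le 2^{\aleph_0}$. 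For the diameter bound, axiom~(e) gives $M\models R_{n_0}(x,y)$ for all $x,y$, hence $d_M(x,y)\le n_0$ by definition of $d_M$, so $\mathrm{diam}(X)\le n_0$. For the covering condition, fix $\varepsilon>0$; axiom~(f) says that for every $u$ there is $c\in C_\varepsilon$ with $M\models\Rep(u,c)$, hence $d_M(u,c^M)\le\varepsilon$, so the $\le n_\varepsilon$ closed balls of radius $\varepsilon$ centred at the points $c^M$, $c\in C_\varepsilon$, cover $X$. Thus $X\in\Ynu$.

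The only genuinely delicate point is the interplay between the strict/non-strict inequalities in the definition of an $X$-structure and of $d_M$, and in particular making sure that in part~$(1)$ the choice $\Rep^{M_X}=\{(x,y):d_X(x,y)\le\varepsilon\}$ really does satisfy axiom~(f) with the centres of \emph{closed} balls of radius $\varepsilon$ — which it does precisely because the balls are closed, matching the $\le\varepsilon$ in the definition of $M_X$. (Had we used the variant $X$-structure with $<$, we would instead need to shrink the radius slightly, so the closed-ball formulation is the convenient one.) I expect this bookkeeping to be the main, though still routine, obstacle; everything else is a direct translation between the metric language and the relational language $L_0$ via Propositions~\ref{unique} and~\ref{univ-axioms}.
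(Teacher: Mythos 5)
Your proof is correct and follows essentially the same route as the paper: in both, part (1) expands the concrete $X$-structure $M_X$ by interpreting $C_0$ onto $X$ (for minimality) and $C_\varepsilon$ by centres of a covering family of closed $\varepsilon$-balls, and part (2) reads off $\nu$-boundedness of the space $(M,d_M)$ from axioms (e) and (f), with uniqueness via Proposition~\ref{unique}. No gaps.
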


\begin{proof}
(1) Let $X\in\Ynu$. Since $|X|\le 2^{\aleph_0}$ and $X$ is $\nu$-bounded,
there is a $f:C\to X$ such that $f(C_0)=X$, and for every $\varepsilon>0$
the closed balls of radius $\varepsilon$ centered at $f(c)$, where $c\in C_\varepsilon$, cover $X$.

The $X$-structure $M_X$ defined in Subsection~\ref{semi-metric-structure} has the following property:
$$M_X\models\Rep(x,y)\quad\text{iff}\quad d_X(x,y)\le\varepsilon,$$
for all $x,y\in X$ and  all $\varepsilon<0$. By Proposition~\ref{univ-axioms}, $M_X$ is a model of $\Gamma$.

Consider the $L$-expansion $M$ of $M_X$ such that
$c^M=f(c)$ for all $c\in C$.
Then $M$  satisfies (e) and (f), by the choice of $f$.
Since $f(C_0)=X$, the $L$-structure $M$ is minimal.
Thus $M\in\Mnu$, and its $L_0$-reduct is the $X$-structure $M_X$.
\medskip

(2) As $M$ satisfies $\Gamma$, the $L_0$-reduct of $M$ is an $X$-structure for some
semi-metric space $X$, which is unique, by Proposition~\ref{unique}.
Since $M$ is a minimal $L$\nobreakdash-structure,
$|M|\le 2^{\aleph_0}$, and so $|X|\le 2^{\aleph_0}$.
As $M$ satisfies (e), the diameter of $X$ is $\le n_0$.
Since $M$ satisfies (f), $X$ is covered by the close balls of radius $\varepsilon$ centered at
$c^M$ with $c\in C_{\varepsilon}$.
Thus $X\in\Ynu$.
\end{proof}

For $M\in\Mnu$ let $\chi(M)$ be the unique $X\in\Ynu$ such that
the $L_0$\nobreakdash-reduct of $M$ is an $X$-structure, which exists by Lemma~\ref{correspondence}(2).
The map
$$\chi: \Mnu\to \Ynu$$
is surjective, by Lemma~\ref{correspondence}(1).
Now, to complete the proof of compactness of $\Ynu$, it suffices to prove

\begin{lemma}
The map $\chi$ is continuous.
\end{lemma}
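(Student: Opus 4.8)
The plan is to show that $\chi$ pulls back basic open sets of the Gromov--Hausdorff topology to open sets of the Stone topology on $\Mnu$. Fix $M_0\in\Mnu$ and put $X_0=\chi(M_0)$. Given $r>0$, I want to find a quantifier-free $L$-sentence $\phi$ with $M_0\models\phi$ such that every $M\in\Mnu$ with $M\models\phi$ satisfies $d_{GH}(\chi(M),X_0)<r$; this shows $\chi^{-1}$ of the open ball $B(X_0,r)$ contains the basic neighbourhood $\ModM(\phi)$ of $M_0$, hence $\chi$ is continuous.

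The key idea is to approximate $X_0$ by the finite $\varepsilon$-nets coming from axiom (f). Choose $\varepsilon>0$ small (depending on $r$, say $\varepsilon<r/10$), and let $a_1,\dots,a_N$ enumerate $c^{M_0}$ for $c\in C_\varepsilon$; by axiom (f) these points form an $\varepsilon$-net of $X_0$. Using Proposition~\ref{unique}, the mutual distances $d_{X_0}(a_i,a_j)=d_{M_0}(a_i,a_j)$ are determined by which $\Rep$-relations hold in $M_0$ between the corresponding constants. So I can pick rationals $\rho_{ij}$ with $|d_{X_0}(a_i,a_j)-\rho_{ij}|<\varepsilon$ and record, in a quantifier-free sentence $\phi$, the finitely many atomic/negated-atomic facts of the form $R_{\rho_{ij}}(c_i,c_j)$ or $\neg R_{\rho_{ij}-\varepsilon}(c_i,c_j)$ that pin down each $d_{M_0}(c_i,c_j)$ to within $2\varepsilon$, together with the instances of axiom (f) for this single $\varepsilon$ (which are already entailed by $\Gamma_\nu$, but including them does no harm). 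Now if $M\in\Mnu$ with $M\models\phi$ and $X=\chi(M)$, then the points $b_i=c_i^M$ ($c_i\in C_\varepsilon$) form an $\varepsilon$-net of $X$, and $|d_X(b_i,b_j)-d_{X_0}(a_i,a_j)|<3\varepsilon$ for all $i,j$ by construction and Proposition~\ref{unique}.

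To conclude I would use the characterization ($\star$) of $d_{GH}$ for semi-metric spaces: build a correspondence between $X_0$ and $X$ by sending each point of $X_0$ to a nearest $a_i$ and each point of $X$ to the corresponding $b_i$ (and matching $a_i\leftrightarrow b_i$). Concretely, take the index set $I=X_0\sqcup X$ with the obvious maps onto $X_0$ and $X$, where a point $x\in X_0$ is ``labelled'' by some $i$ with $d_{X_0}(x,a_i)<\varepsilon$ and a point $y\in X$ by some $j$ with $d_X(y,b_j)<\varepsilon$, and then glue the two copies using $a_i\mapsto b_i$. A routine triangle-inequality estimate gives $\sup_{p,q}|d_{X_0}(\cdot)-d_X(\cdot)|\le 2\varepsilon+2\varepsilon+3\varepsilon<r$, so $d_{GH}(X_0,X)<r/2<r$. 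Since ($\star$) holds for semi-metric spaces, this is legitimate even though $X_0,X$ need not be metric.

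The main obstacle is organizing the bookkeeping of ($\star$): one must be careful that the relation ``$i\mapsto x_i$, $i\mapsto y_i$ onto $X_0$, $X$'' is genuinely a correspondence (surjective on both sides), and that the atomic sentence $\phi$ recorded from $M_0$ really does force, for every competing $M$, the estimate $|d_X(b_i,b_j)-d_{X_0}(a_i,a_j)|<3\varepsilon$ --- this is where Proposition~\ref{unique} is doing the work, since it says the $\Rep$-diagram of the finitely many constants in $C_\varepsilon$ determines their mutual distances. Everything else is elementary: finiteness of $C_\varepsilon$ for $\varepsilon>0$ guarantees $\phi$ is a genuine (finite) quantifier-free sentence, and the density/$\varepsilon$-net properties are exactly axioms (e), (f) of $\Gamma_\nu$.
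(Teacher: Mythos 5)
Your argument is correct and is essentially the paper's own proof: both reduce to the finite $\varepsilon$-net $\{c^M:c\in C_\varepsilon\}$ supplied by axiom (f), encode the approximate distance matrix of that net in a finite quantifier-free $L$-sentence via the $\Rep$-diagram of the constants in $C_\varepsilon$, and conclude with the formula $(\star)$. The only difference is cosmetic: the paper uses the uniform grid of radii $\varepsilon,2\varepsilon,\dots,m\varepsilon$ and passes through $d_{GH}(X,X_\varepsilon)\le\varepsilon$ by the triangle inequality, whereas you tailor the radii $\rho_{ij}$ to $M_0$ and assemble one global correspondence.
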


\begin{proof}
To prove that $\chi$ is continuous at $M_0\in\Mnu$, we need to show that for any $\alpha>0$ there is
$\psi\in\QFL$ with $M_0\models\psi$ such that for any $N\in\Mnu$ with $N\models\psi$
$$d_{GH}(\chi(M_0),\chi(N))<\alpha.$$
For any $\alpha>0$ we construct a finite $\Phi\subseteq\QFL$ such that, for any $M,N\in\Mnu$,
$$(M\models\phi\ \text{iff}\  N\models\phi\ \ \text{for all $\phi\in\Phi$})
\ \Rightarrow\ d_{GH}(\chi(M),\chi(N))<\alpha;$$
then we can take as $\psi$ the conjunction of all sentences from $\Phi\cup\{\neg\phi:\phi\in\Phi\}$ that hold in $M_0$.

Choose $\varepsilon$ with $0<\varepsilon< n_0$ and $5\varepsilon/2<\alpha$.
Let $m$ be the integer with $$0<m\varepsilon<n_0\le (m+1)\varepsilon.$$
Let $\Phi$ be the set of all sentences  $R_{i\varepsilon}(a,b)$, where
$i\in\{1,\dots,m\}$ and ${a,b\in C_\varepsilon}$.
We show that the finite set $\Phi$ satisfies the required conditions.

Let $M,N\in\Mnu$. Denote $\chi(M)=X$ and $\chi(N)=Y$;
so the $L_0$-reduct of $M$ is an $X$-structure, and
the $L_0$-reduct of $N$ is a $Y$-structure.
Let
$$
X_\varepsilon=\{c^M: c\in C_\varepsilon\},\qquad
Y_\varepsilon=\{c^N: c\in C_\varepsilon\}.
$$
Since $X$ is covered by the closed balls centered at points of the set $X_\varepsilon$, we have
$d_{GH}(X,X_\varepsilon)\le\varepsilon$.
Similarly,
$d_{GH}(Y,Y_\varepsilon)\le\varepsilon$.
Therefore by the triangle inequality,
$$
d_{GH}(X,Y)\le \varepsilon+ d_{GH}(X_\varepsilon, Y_\varepsilon)+\varepsilon.
$$
Hence it suffices to show that if $M\models \phi$  iff  $N\models \phi$  for all $\phi\in\Phi$,
then $$d_{GH}(X_\varepsilon, Y_\varepsilon)\le\varepsilon/2,$$ because this implies
$$
d_{GH}(X,Y)\le \varepsilon+ \varepsilon/2+\varepsilon=5\varepsilon/2<\alpha.
$$
To prove $d_{GH}(X_\varepsilon, Y_\varepsilon)\le\varepsilon/2,$
it suffices to show
that for any $a,b\in C_\varepsilon$
$$|d_X(a^N,b^N)-d_Y(a^M,b^M)|\le\varepsilon,$$
by ($\star$) from Subsection~\ref{GH-facts}.
The latter inequality holds because, first,
the diameters of $M$ and $N$ are $\le n_0$, and so
$$0\le d_X(a^N,b^N),\ d_Y(a^M,b^M)\le n_0,$$
and, second,
none of the numbers $\varepsilon ,2\varepsilon,\dots, m\varepsilon$
can be strictly between
$d_X(a^N,b^N)$ and $d_Y(a^M,b^M)$: if, say,
$$d_X(a^N,b^N)<i\varepsilon< d_Y(a^M,b^M),$$
then $N\models R_{i\varepsilon}(a,b)$ and $M\nvDash R_{i\varepsilon}(a,b)$.
The lemma is proven.
\end{proof}

The proof of Theorem~\ref{nu-bound} is completed.\hfill $\square$


\begin{thebibliography}{BDLW96}

\bibitem{BBI} D. Burago, Yu. Burago, S. Ivanov, \emph{A course in metric geometry},
Graduate Studies in Math. 33, American Mathematical Society, Providence, Rhode Island, 2001.

\bibitem{Ch}
C. Champetier, \emph{L'espace des groupes de type fini}, Topology 39 (2000), 657--680.

\bibitem{ChG}
C. Champetier, V. Guirardel,
\emph{Limit groups as limits of free groups: compactifying the set
of free groups}, Israel J. Math. 146 (2005), 1--76.

\bibitem{CGP}
Y. de Cornulier, L. Guyot, W. Pitsch,
\emph{On the isolated points in the space of groups}, J. Algebra 307 (2007), 254--277.

\bibitem{Gri} R.~Grigorchuk,
\emph{Degrees of growth of finitely generated groups and the theory of invariant
means}, Math. USSR Izv. 25 (1985), 259--300.

\bibitem{Gro1} M.~Gromov,
\emph{Groups of polynomial growth and expanding maps}, Publ. Inst. Hautes \'{E}tudes
Scient. 53 (1981), 53--73.

\bibitem{Gro2} M.~Gromov,
\emph{Metric structures for Riemannian and non-Riemannian spaces}, Progress in Mathematics 152,
Birkh\"{a}user, Boston, 1999.

\bibitem{H} W.~Hodges, \emph{Model theory},
Encyclopedia of Mathematics and its Applications 42, Cambridge
University Press, Cambridge, 1993.

\bibitem{S} R.~Sikorski, \emph{Boolean algebras},
Ergebnisse der Mathematik und ihrer Grenzgebiete,
Band 25, Springer-Verlag New York Inc., New York, 1969.

\bibitem{Shm} A.~L.~\u{S}mel$'$kin,
\emph{On soluble products of groups},
Sibirsk. Mat. Zh. 6 (1965), 212--220.

\bibitem{DW} L.~van den Dries, A. J. Wilkie, \emph{Gromov's theorem on groups of
polynomial growth and elementary logic}, J. Algebra 89 (1984), 349--374.
\end{thebibliography}
\end{document}